\numberwithin{equation}{section}
\numberwithin{figure}{section}
\theoremstyle{plain}
\newtheorem{thm}{\protect\theoremname}
  \theoremstyle{definition}
  \newtheorem*{example*}{\protect\examplename}
  \theoremstyle{definition}
  \newtheorem{defn}{\protect\definitionname}
  \theoremstyle{remark}
  \newtheorem{rem}{\protect\remarkname}
  \theoremstyle{plain}
  \newtheorem{lem}{\protect\lemmaname}
  \providecommand{\definitionname}{Definition}
  \providecommand{\examplename}{Example}
  \providecommand{\lemmaname}{Lemma}
  \providecommand{\remarkname}{Remark}
\providecommand{\theoremname}{Theorem}
\begin{document}

\title[DACOROGNA-MOSER THEOREM WITH CONTROL OF SUPPORT]{DACOROGNA-MOSER THEOREM ON THE JACOBIAN DETERMINANT EQUATION WITH
CONTROL OF SUPPORT}

\email{pteixeira.ir@gmail.com}

\maketitle
\renewcommand{\thefootnote}{}
\footnote{2010 \emph{Mathematics Subject Classification}. Primary 35F30.}
\footnote{\emph{Key words and phrases}. Volume preserving diffeomorphism, volume correction, prescribed Jacobian PDE, control of support, optimal regularity.}
\renewcommand{\thefootnote}{\arabic{footnote}} \setcounter{footnote}{0}

\centerline{\scshape Pedro Teixeira}
\medskip
{\footnotesize
\centerline{Centro de Matemática da Universidade do Porto}
\centerline{Rua do Campo Alegre, 687, 4169-007 Porto}
\centerline{Portugal}
}

\begin{abstract}
The original proof of Dacorogna-Moser theorem on the prescribed Jacobian
PDE, $\text{det}\,\nabla\varphi=f$, can be modified in order to obtain
control of support of the solutions from that of the initial data,
while keeping optimal regularity. Briefly, under the usual conditions,
a solution diffeomorphism $\varphi$ satisfying
\[
\text{supp}(f-1)\subset\varOmega\Longrightarrow\text{supp}(\varphi-\text{id})\subset\varOmega
\]

\noindent can be found and $\varphi$ is still of class $C^{r+1,\alpha}$
if $f$ is $C^{r,\alpha}$, the domain of $f$ being a bounded connected
open $C^{r+2,\alpha}$ set $\varOmega\subset\mathbb{R}^{n}$.
\end{abstract}

\medskip{}

\emph{\hfill{}In memoriam Jürgen Moser}

\smallskip{}

\section{\textbf{Introduction}}

In \cite[p.4]{DM}, Dacorogna and Moser formulated a celebrated result
on the solutions to the Jacobian determinant PDE with pointwise fixed
boundary condition, which found many applications across several fields
of research. It is one of the main tools for the correction of volume
distortion (in relation to the standard volume) in Hölder spaces.
Its main advantage over similar results lies in its optimal regularity,
the solution diffeomorphism $\varphi$ is $C^{r+1,\alpha}$ if the
initial data $f$ is $C^{r,\alpha}$. Nevertheless, from the point
of view of applications, Dacorogna-Moser theorem has, perhaps, one
main drawback: even if $\text{supp}(f-1)\subset\varOmega$, the solution
obtained does not, in general, extend by the identity to the whole
$\mathbb{R}^{n}$ (in the $C^{r+1,\alpha}$ class). This is a serious
limitation, for it is often necessary to guarantee that the volume
correcting diffeomorphism acts (by composition) only inside the region
$\varOmega$ where the volume distortion takes place, while keeping
the original diffeomorphism (or map) unchanged outside that domain
(see the Example below). This limitation comes from the elliptic regularity
solutions to Neumann problems arising in the proof of the auxiliary
linearized problem. Other approaches (e.g. the flow method of Moser)
permit to take control of support
\begin{equation}
\ensuremath{\text{supp}(f-1)\subset\varOmega}\Longrightarrow\text{supp}(\varphi-\text{id})\subset\varOmega
\end{equation}
but fail to achieve the desired gain of regularity. Notwithstanding,
it is possible to modify Dacorogna-Moser original proof (in its improved
form given in \cite[p.192]{CDK}) in order to guarantee that condition
(1.1) holds, keeping simultaneously optimal regularity, which significantly
enlarges the scope of applicability of the original result.
\begin{thm}
Let $\varOmega\subset\mathbb{R}^{n}$ be a bounded connected open
$C^{r+2,\alpha}$ set, $r\geq0$ an integer and $0<\alpha<1$. Given
$f\in C^{r,\alpha}(\overline{\varOmega})$ such that $f>0$ in $\overline{\varOmega}$
and $\int_{\varOmega}f=\text{\emph{meas}}\,\varOmega$, there exists
$\varphi\in\text{\emph{Diff}}{}^{r+1,\alpha}(\overline{\varOmega},\overline{\varOmega})$
satisfying:
\begin{equation}
\left\{ \begin{array}{lll}
\text{\emph{det}}\,\nabla\varphi=f &  & in\,\,\,\varOmega\\
\varphi=\text{\emph{id}} &  & on\,\,\,\partial\varOmega
\end{array}\right.
\end{equation}
Moreover, if $\text{\emph{supp}}(f-1)\subset\varOmega$ then $\text{\emph{supp}}(\varphi-\text{\emph{id}})\subset\varOmega$
and no regularity needs to be imposed on $\varOmega$.
\end{thm}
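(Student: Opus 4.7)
The plan is to adapt the contraction-based proof of Dacorogna--Moser in the improved form of \cite[p.~192]{CDK} by replacing its linear ingredient (the Neumann problem for the Laplacian) with one that preserves compact support; the first assertion of the theorem, without the support condition, is the standard Dacorogna--Moser result and will be taken as given, so I focus on the second assertion.

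Assume $\text{supp}(f-1)\subset\varOmega$. The first step is to pass to a smooth auxiliary subdomain $\varOmega_{0}$ with $\text{supp}(f-1)\subset\varOmega_{0}\Subset\varOmega$, chosen so that $\varOmega_{0}$ is a finite union of open sets star-shaped with respect to balls; such a $\varOmega_{0}$ can always be produced by covering the compact set $\text{supp}(f-1)$ by finitely many balls compactly contained in $\varOmega$ and smoothing, \emph{regardless of the regularity of $\partial\varOmega$}. Since $\int_{\varOmega}(f-1)=0$ and $f\equiv1$ on $\varOmega\setminus\varOmega_{0}$, the restriction $f|_{\overline{\varOmega_{0}}}$ lies in $C^{r,\alpha}$ with $\int_{\varOmega_{0}}f=\text{meas}\,\varOmega_{0}$. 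It therefore suffices to construct $\varphi_{0}\in\text{Diff}^{r+1,\alpha}(\overline{\varOmega_{0}},\overline{\varOmega_{0}})$ with $\det\nabla\varphi_{0}=f$ on $\varOmega_{0}$ and, crucially, $\varphi_{0}\equiv\text{id}$ on an entire open neighborhood of $\partial\varOmega_{0}$; extending $\varphi_{0}$ by the identity to $\overline{\varOmega}$ then produces the desired $\varphi$ with $\text{supp}(\varphi-\text{id})\subset\varOmega_{0}\subset\varOmega$.

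To construct $\varphi_{0}$ I would follow the contraction scheme of \cite{CDK}. Writing $\varphi_{0}=\text{id}+u$ reduces the equation to $\text{div}\,u=(f-1)-Q(\nabla u)$, where $Q(\nabla u):=\det(I+\nabla u)-1-\text{div}\,u$ is at least quadratic in $\nabla u$. At each step of the iteration one must invert the divergence on a right-hand side $g\in C^{r,\alpha}(\overline{\varOmega_{0}})$ with $\int g=0$ and $\text{supp}(g)\Subset\varOmega_{0}$. In place of $v=\nabla\phi$ coming from the Neumann problem---which does not vanish near $\partial\varOmega_{0}$---I would define a linear right-inverse $L$ by decomposing $\varOmega_{0}$ into its star-shaped pieces, splitting $g$ via a subordinate partition of unity together with zero-mean correction terms compactly supported in the pairwise overlaps, and applying the classical Bogovski\u{\i} integral kernel on each piece. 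This should produce a bounded linear operator $L$ satisfying $\text{div}\,L(g)=g$, $\text{supp}\,L(g)\Subset\varOmega_{0}$, and the optimal estimate $\Vert L(g)\Vert_{C^{r+1,\alpha}}\leq C\Vert g\Vert_{C^{r,\alpha}}$. With $L$ in hand, $T(u):=L\!\left((f-1)-Q(\nabla u)\right)$ is a contraction on a small ball in the closed subspace of $C^{r+1,\alpha}(\overline{\varOmega_{0}},\mathbb{R}^{n})$ of maps compactly supported in $\varOmega_{0}$, provided $\Vert f-1\Vert_{C^{r,\alpha}}$ is small; its fixed point is $u$. The case of a general $f>0$ reduces to this near-identity case by the multiplicative deformation of \cite{DM,CDK}, writing $f=f_{N}\cdots f_{1}$ with each $f_{k}$ close to $1$ and $\text{supp}(f_{k}-1)\subset\varOmega_{0}$, and composing the corresponding diffeomorphisms; the support condition is preserved at every stage.

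The principal obstacle I anticipate is the rigorous construction of the operator $L$ with \emph{both} the sharp $C^{r,\alpha}\to C^{r+1,\alpha}$ estimate \emph{and} strict containment of supports inside $\varOmega_{0}$. Each ingredient---the Bogovski\u{\i} kernel on a star-shaped domain, its H\"older bounds, and the partition-of-unity assembly on a finite union of such pieces---is classical in isolation, but combining them while uniformly tracking the support of every summand, and in particular of the zero-mean corrections that propagate mass across overlaps without leaving $\varOmega_{0}$, is where the most delicate accounting of the argument will concentrate.
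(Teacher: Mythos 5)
Your overall architecture matches the paper's: localize to a smooth subdomain compactly containing $\text{supp}(f-1)$, solve there with $\varphi_{0}=\text{id}$ near the inner boundary, and extend by the identity (the paper does this via an exhaustion of $\varOmega$ by smooth domains, Lemma 1 of the Appendix, which also takes care of a point you gloss over: the auxiliary domain $\varOmega_{0}$ must be \emph{connected}, since otherwise $\int_{\varOmega_{0}}(f-1)=0$ gives no information on each component; a finite union of balls covering $\text{supp}(f-1)$ need not be connected and must be joined by tubes). Your linear ingredient is genuinely different: the paper solves $\text{div}\,u=h$ with $u=0$ in a collar by correcting the Dacorogna--Moser solution through the duality $v\leftrightarrow v\lrcorner\omega$ and a relative Poincar\'e lemma with optimal regularity (itself resting on Csat\'o--Dacorogna--Kneuss's global Poincar\'e lemma), whereas you propose the Bogovski\u{\i} kernel on star-shaped pieces. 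If the $C^{r,\alpha}\to C^{r+1,\alpha}$ bound for Bogovski\u{\i} is established (plausible via Calder\'on--Zygmund theory, but it is a real piece of work and you correctly flag it), your route has one incidental advantage the paper has to argue for separately: an explicit integral kernel is automatically \emph{universal} (independent of $r,\alpha$), which is essential for running the contraction in the $C^{1,\gamma}$ norm while controlling the $C^{r+1,\alpha}$ norm.

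The genuine gap is in your last step, the reduction of general $f>0$ to the near-identity case, and it is exactly the point the paper identifies as requiring a new idea. If you write $f=f_{N}\cdots f_{1}$ and compose diffeomorphisms, then setting $\psi_{k}=\varphi_{k}\circ\cdots\circ\varphi_{1}$ you need $\det\nabla\psi_{k}=f_{k}\cdots f_{1}$ for each $k$, hence every partial product $g_{k}=f_{k}\cdots f_{1}$ must satisfy $\int_{\varOmega_{0}}g_{k}=\text{meas}\,\varOmega_{0}$. Natural factorizations (e.g.\ $g_{k}=f^{k/N}$) violate this, and the classical fix --- renormalizing each factor by a multiplicative constant, as in Dacorogna--Moser and in \cite[p.~201--202]{CDK} --- is unavailable here precisely because each $f_{k}$ must equal $1$ in a neighbourhood of $\partial\varOmega_{0}$, and a constant $\neq1$ destroys that. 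Your sentence ``the support condition is preserved at every stage'' asserts the conclusion without supplying the mechanism. The paper's resolution (Theorem 6) is a two-factor decomposition $f=\bigl(h f/\widetilde{f}\bigr)\cdot\bigl(\widetilde{f}/h\bigr)$ with $\widetilde{f}$ a mollification of $f$ (handled by Moser's flow method, which loses no support) and $h=1+\widehat{t}\,\eta\phi$ a measure-correcting function equal to $1$ near the boundary, found by the intermediate value theorem so that $\int_{\varOmega_{0}}(f/\widetilde{f})h=\text{meas}\,\varOmega_{0}$. Without this (or an equivalent device ensuring the integral constraints on all intermediate data while keeping them $\equiv1$ near $\partial\varOmega_{0}$), your proof does not close.
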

\begin{example*}
We give an application to a situation that arises in conservative
dynamics. It corresponds to the natural improvement of the example
given in \cite[p.3]{DM}, made possible by the additional control
of support condition (1.1). Let $r,\alpha$ be as in Theorem 1. Suppose
that $\psi\in\text{Diff}^{r+1,\alpha}(\mathbb{R}^{n})$ and $\varOmega$
is a bounded connected open set such that (a) $\psi(\varOmega)=\varOmega$
and (b) $\psi$ is volume preserving in a neighbourhood of $\mathbb{R}^{n}\setminus\varOmega$
(always in relation to the standard volume; the diffeomorphisms are
of $\mathbb{R}^{n}$ onto itself and orientation preserving). Then
setting $f=\text{det}\,\nabla\psi^{-1}|_{\overline{\varOmega}}$ in
Theorem 1 (noting that $\text{supp}(f-1)\subset\varOmega$), and extending
the solution obtained by the identity to the whole $\mathbb{R}^{n}$,
we find $\varphi\in\text{Diff}^{r+1,\alpha}(\mathbb{R}^{n})$ such
that 
\begin{itemize}
\item $\Psi:=\varphi\circ\psi\in\text{Diff}_{\text{vol}}^{r+1,\alpha}(\mathbb{R}^{n})$
i.e. $\Psi$ is volume preserving on $\mathbb{R}^{n}$
\item $\Psi=\psi$ in a neighbourhood of $\mathbb{R}^{n}\setminus\varOmega$
\end{itemize}
i.e. $\varphi$ corrects the volume distortion of $\psi$ inside $\varOmega$,
while keeping $\psi$ unchanged in $\mathbb{R}^{n}\setminus\varOmega$.
Observe that under the above conditions (a) and (b) plus additional
regularity imposed on $\varOmega$, Dacorogna-Moser theorem only guarantees
the existence of $\varphi_{0}\in\text{Diff}^{r+1,\alpha}(\overline{\varOmega},\overline{\varOmega})$
such that $\Psi_{0}=\varphi_{0}\circ\psi|_{\overline{\varOmega}}\in\text{Diff}_{\text{vol}}^{r+1,\alpha}(\overline{\varOmega},\overline{\varOmega})$
and $\Psi_{0}=\psi$ on $\partial\varOmega$ (prescribed boundary
data), but nothing guarantees that $\Psi_{0}$ extends by $\psi$
to the whole $\mathbb{R}^{n}$ (in the $C^{r+1,\alpha}$ class). Needless
to say, the above reasoning immediately applies to precompact connected
open subsets $\varOmega$ of orientable $n$-manifolds (second countable,
Hausdorff and boundaryless), provided $\varOmega$ smoothly embeds
in $\mathbb{R}^{n}$.
\end{example*}

\subsection{The control of support problem for optimal regularity solutions to
the Jacobian determinant equation.}

The problem of obtaining solutions to the general pullback equation
$f=\varphi^{*}(g)$ (between prescribed volume forms with the same
total volume over a domain), exhibiting both optimal regularity and
control of the support had already been pointed out in \cite[Section III]{DM}
(see also \cite[p.19]{CDK}). Here we will restrict our attention
to the particular case of $g\equiv1$ i.e. to the problem of finding
solutions to the Jacobian determinant equation $\text{det}\,\nabla\varphi=f$
that simultaneously satisfy these two particularly useful conditions.

In \cite[Theorems 3 and 4]{AV}, such solutions were found in the
$C^{\infty}$ case. Standing within $\mathbb{R}^{n}$, Avila used
the duality between divergence-free vector fields and closed $(n-1)$-forms
together with the relative Poincaré lemma and Dacorogna-Moser original
solution \cite[Theorem 2]{DA} to solve the corresponding linearized
problem $\text{div}\,u=f-1$ with control of the support (and, implicitly,
also with that of the norms). Working in the smooth category, he could
then use Moser's flow method \cite[Lemma 3]{DM} to immediately get
the desired solution diffeomorphism. The idea behind the simple yet
efficient and elegant method providing the solution to the linearized
problem seems, as this author himself points out, to have earlier
roots (see for instance \cite[p.290]{TA}). In principle, the same
method could be applied to get the desired solutions to the corresponding
linearized problem in the $C^{r,\alpha}$ (Hölder) case. However,
due to the loss of regularity under exterior derivation, the relative
Poincaré lemma had to be obtained with optimal regularity (see Section
(1.2) below), but such result seemed to be lacking.

In \cite{MA}, Matheus made a simple yet crucial remark: the missing
link (relative Poincaré lemma with optimal regularity, see Theorem
2) could be readily obtained combining the standard relative Poincaré
lemma (with no gain of regularity, see \cite[Theorem 17.3]{CDK},
\cite[p.447]{AMR}) with the quite recent global Poincaré lemma with
optimal regularity of Csató, Dacorogna and Kneuss \cite[p.148]{CDK}.
In fact, this last result turns out to be the key new ingredient in
the solution to the corresponding linearized problem (Theorem 3).
From this point, by following directly the steps of Dacorogna-Moser
original proof, a preliminary solution to the Jacobian determinant
equation with optimal regularity and control of the support in the
case of $\left\Vert f-1\right\Vert _{C^{0,\alpha}}$ small enough
(corresponding to \cite[Lemma 4]{DM}) is readily obtained (Theorem
4), with only a trivial and obvious modification in the definition
of three functional spaces appearing in the original proof (see Section
5 below).

However, to get to the general case, with no restriction imposed on
the Hölder norms of $f-1$, a last difficulty had to be overcome when
adapting the final step in Dacorogna-Moser proof (\cite[Step 4, p.12]{DM}).
Originally, the correction of the measure of an auxiliary function
needed in the process was (implicitly) achieved multiplying it by
a suitable constant (see \cite[p.544]{DA}, \cite[p.201-202 and 391]{CDK}),
but this approach no longer works in the present (control of support)
case, as all functions involved must now equal 1 in a neighbourhood
of $\partial\varOmega$. A new measure correcting method adapted to
this particular case is needed. 

Our contribution here is twofold: to provide this last step (Theorem
6) in the solution to the problem under consideration, thus completing
the puzzle formed by the contributions of Dacorogna-Moser, Avila,
Csató-Dacorogna-Kneuss and Matheus. Due to the action of the convolution
operator, in Theorem 6 the solution diffeomorphism will equal the
identity in a collar that is only slightly thinner than the original
collar where $f$ equals $1$ (this difference being as small as pleased).
The measure correcting problem mentioned above is then solved multiplying
the auxiliary function by a suitable measure correcting smooth function
found via the intermediate value theorem. At this point, Theorem 1
easily follows, any bounded connected open set $\varOmega$ (domain)
having an exhaustion by smooth domains (Appendix, Lemma 1). However,
upon closer inspection, one sees that the control of support conclusion
in Theorem 1 is not completely satisfactory. Assuming that $\varOmega$
is any bounded connected open set and fixing a small enough $d>0$,
for each function $f$ as in the statement and such that the distance
from $\text{supp}(f-1)$ to $\partial\varOmega$ is $\geq d$, by
Theorem 1 there exists a neighbourhood $V_{f}$ of $\partial\varOmega$
(in $\overline{\varOmega})$ where the solution diffeomorphism $\varphi_{f}$
equals the identity, but nothing guarantees a priori the existence
of a neighbourhood $V_{d}$ of $\partial\varOmega$, depending only
on $d$, where \emph{all} these solutions $\varphi_{f}$ (for all
$f$ as above) equal the identity, simultaneously. Actually, this
desirable conclusion easily follows from Theorem 6, and Theorem 7
refines the statement of Theorem 1 to account for it.

Our second aim here is to present a complete and coherent proof of
the whole result, which is roughly sketched in \cite{MA} essentially
only up to Theorem 5. An often neglected condition which proves crucial
when adapting the original proof of Dacorogna-Moser to the control
of support case is \emph{universality}. Care must be taken to ensure
that the bounded linear operator constructed in the solution to the
linearized problem $\text{div}\,u=h$ (Theorem 3) is universal i.e.
independent of $r$ and $\alpha$, otherwise the proof of Theorem
4 (adapting that of \cite[Lemma 4]{DM}) will not work. (For instance,
in \cite[Theorem 3]{AV} it is implicitly assumed that the solution
vector field provided by \cite[Theorem 2]{DM} is smooth ($C^{\infty}$)
if the function $g$ is smooth. Actually this does not follow from
the statement in \cite{DM} (which guarantees only, for each integer
$r\geq0$ and $0<\alpha<1$, the existence of a solution $u_{r,\alpha}$
of class $C^{r+1,\alpha}$), but it does indeed follow from the inspection
of the proof, see Footnote 3 below).

\subsection{Solution to the linearized problem.}

For the convenience of the reader, and serving as a guide to Sections
3 and 4, we detail here part of the proof strategy up to Theorem 3.
The first step in adapting Dacorogna-Moser original proof to the control
of support case is to construct an universal\footnote{See Remark 3.}
bounded linear operator $h\rightarrow u$ solving, with optimal regularity,
the linearized problem

\begin{equation}
\begin{cases}
\text{div\,}u=h & \text{in \ensuremath{\varOmega} }\\
u=0 & \text{in \ensuremath{U}}
\end{cases}
\end{equation}
where $\varOmega\subset\mathbb{R}^{n}$ is a bounded connected open
$C^{\infty}$ set (here briefly called a smooth domain), $h\in C^{r,\alpha}(\overline{\varOmega})$
satisfies $\int_{\varOmega}h=0$ and $h=0$ in a smooth collar $U$
of $\overline{\varOmega}$ (see Definition 2). This is achieved as
follows (c.f. \cite[Theorem 3]{AV}): under the hypothesis above,
\cite[Theorem 2]{DM} provides a first solution $u_{0}\in C^{r+1,\alpha}(\overline{\varOmega})$
to $\text{div\,}u_{0}=h$ in $\varOmega$ which, however, only guarantees
that $u_{0}=0$ on $\partial\varOmega$. But $u_{0}$ can be modified
to satisfy $u=0$ in $U$ while still verifying the estimate
\begin{equation}
\left\Vert u\right\Vert _{C^{r+1,\alpha}}\leq C\left\Vert h\right\Vert _{C^{r,\alpha}}
\end{equation}
for some constant $C=C(r,\alpha,U,\varOmega)>0$ and furthermore,
the correspondence $h\rightarrow u$ can be made linear and universal
(Theorem 3). It is enough to find an universal bounded linear operator
extending $u_{0}|_{U}$ to a divergence-free $C^{r+1,\alpha}$ vector
field $\widetilde{u_{0}}$ on $\overline{\varOmega}$ and then set
$u:=u_{0}-\widetilde{u_{0}}$. Here the idea is to replicate the procedure
in \cite[Theorem 3]{AV}, but now using the relative Poincaré lemma
with optimal regularity (Theorem 2) to circumvent the loss of regularity
under exterior derivation. Briefly, this goes as follows: instead
of trying to extend $u_{0}|_{U}$ directly, in a divergence-free way,
to the whole $\overline{\varOmega}$ (which doesn't seem easy), the
idea is to exploit the natural duality $v\leftrightarrow v\,\lrcorner\,\omega$
(where $\omega$ stands for the canonical volume form on $\mathbb{R}^{n}$)
to make $u_{0}|_{U}$ correspond to a closed $C^{r+1,\alpha}$ $(n-1)$-form
$u^{*}$ on $U$. Since $u_{0}|_{U}$ vanishes on $\partial\varOmega$,
so does $u^{*}$. By the relative Poincaré lemma with optimal regularity
(Theorem 2), there is a $C^{r+2,\alpha}$ $(n-2)$-form $\gamma$
on $U$ such that $d\gamma=u^{*}$. It remains to extend $\gamma$
to the whole $\overline{\varOmega}$ and then follow the inverse procedure
to recover the vector field $u_{0}|_{U}$, now extended (in the $C^{r+1,\alpha}$
class) to the whole $\overline{\varOmega}$, in a divergence-free
way. However, to get the desired solution to (1.3) satisfying (1.4),
care must be taken in the construction of this extension operator
to guarantee that it is, not only linear bounded (in relation to the
$C^{r+2,\alpha}$ norm), but also universal i.e. independent of $r$
and $\alpha$ (as mentioned in Section (1.1), this turns out to be
crucial in the proof of Theorem 4).

\subsection{Limitations of the present solution to the main problem.}

As it will be seen ahead, when $\text{supp}(f-1)\subset\varOmega$,
the construction provided here of a diffeomorphism $\varphi$ simultaneously
satisfying (1.2) and (1.1) depends, in an essentially way, on the
distance $d$ from $\text{supp}(f-1)$ to $\partial\varOmega$. As
a consequence, global estimates of the kind 
\begin{equation}
\left\Vert \varphi-\text{id}\right\Vert _{C^{r+1,\alpha}}\leq C\left\Vert f-1\right\Vert _{C^{r,\alpha}}
\end{equation}
obtained in \cite[p.192]{CDK}, with $C$ is independent of $d$,
which are valid if condition (1.1) is dropped, are actually impossible
to attain by the present method (see Section 8; c.f. Theorem 4). For
this reason, a more uniform method of construction of the solutions
to problem under consideration, permitting useful estimates as (1.6)
with $C$ independent of $d$, would be desirable.

On the other hand, while the existence of optimal regularity solutions
for the more general pullback equation $f=\varphi^{*}(g)$ (between
prescribed volume forms with equal total volume over $\varOmega$)
follows immediately from Dacorogna-Moser result mentioned above (see
\cite[p.4]{DM}), presently we are unable to derive the corresponding
control of support condition
\begin{equation}
\ensuremath{\text{supp}(f-g)\subset\varOmega}\Longrightarrow\text{supp}(\varphi-\text{id})\subset\varOmega
\end{equation}
from the results here obtained.

From the technical point of view, it should be recognized that the
present note adds little to the deepness and beauty of Dacorogna and
Moser's original proof, its main advantage being, perhaps, the complete
transparency and the low deductive effort from previously known results.
It is also worth mentioning that most of the key results involved
in the simple deduction chain that follows are still due to the original
authors, Bernard Dacorogna and Jürgen Moser, working together, alone
or with other authors. This note is dedicated to the memory of the
later.

Finally, we would like to call the reader's attention to the excellent
book \cite{CDK} by Csató, Dacorogna and Kneuss, providing an invaluable
(and quite unique) reference on the pullback equation for differential
forms, the generalized Poincaré lemma and on Hölder spaces in general.

\section{\textbf{Dimension one, notation and conventions}}

\subsection{The one-dimensional case}

When $n=1$ i.e. $\mathbb{R}^{n}=\mathbb{R}$, Dacorogna-Moser Theorem
1' \cite[p.4]{DM} with additional control of support is trivially
true: let $\varOmega=(a,b)$, where $-\infty<a<b<\infty$; let $r\geq0$
be an integer and $0\leq\alpha\leq1$. If $f\in C^{r,\alpha}(\overline{\varOmega})$
satisfies $f>0$ in $\overline{\varOmega}$ and $\int_{a}^{b}f=b-a$,
then
\[
\varphi(x)=a+\int_{a}^{x}f(t)\,dt
\]
belongs to $\text{Diff}^{r+1,\alpha}(\overline{\varOmega},\overline{\varOmega}$),
$\varphi=\text{id on \ensuremath{\partial\varOmega=\{a,b\}}}$ and
for any $0<\eta\leq(b-a)/2$, letting $U=[a,a+\eta]\cup[b-\eta,b],$
it is immediate to verify that
\[
f=1\text{ \,in }U\Longrightarrow\varphi=\text{id\text{ \,in }}U
\]
For this reason we shall concentrate on the case $n\geq2$. Note,
for instance, that while Theorem 3 is trivially true for $n=1$, its
proof fails in that dimension. 

\subsection{Notation and conventions}

For brevity of expression, we introduce the following definition of
domain, which is narrower than the usual one (as boundedness is imposed).
\begin{defn}
\emph{$\bullet$~Domain}. A bounded, connected open set $\varOmega\subset\mathbb{R}^{n}$
is called here a \emph{domain}. Let $r\geq0$ be an integer and $0\leq\alpha\leq1$.
Domains with $C^{r,\alpha}$ boundary (briefly $C^{r,\alpha}$ domains)
are defined in the usual way \cite[p.338]{CDK}. A domain is smooth
if it is $C^{\infty}$. 

\emph{$\bullet$~Banach space} $C^{r,\alpha}(\overline{\varOmega})$.
Let $\varOmega\subset\mathbb{R}^{n}$ be a domain, $r\geq0$ an integer
and $0\leq\alpha\leq1$. The space $C^{r,\alpha}(\varOmega)$ is defined
in the usual way \cite[p.336-337]{CDK}. $C^{0}(\overline{\varOmega})$
is the space of continuous functions on $\overline{\varOmega}$ and
we define $C^{r,\alpha}(\overline{\varOmega})$ as the subspace of
all functions $f\in C^{0}(\overline{\varOmega})$ such that (1) $f|_{\varOmega}$
is $C^{r}$, (2) all its partial derivatives up to order $r$ extend
continuously to $\overline{\varOmega}$ and (3) for every multiindex
$b$ of order $|b|=r$, $[\partial^{b}f]_{C^{0,\alpha}(\overline{\varOmega})}<\infty,$
where for $D\subset\mathbb{R}^{n}$ (with more than one point),
\[
[g]_{C^{0,\alpha}(D)}:=\underset{x,y\in D;\,x\neq y}{\text{sup}}\left\{ \frac{\left|g(x)-g(y)\right|}{\left|x-y\right|^{\alpha}}\right\} 
\]
As usual, $u\in C^{r,\alpha}(\overline{\varOmega};\mathbb{R}^{n})$
if all its components belong to $C^{r,\alpha}(\overline{\varOmega}).$
If $\varOmega,\varOmega'\subset\mathbb{R}^{n}$ are domains and $r\geq1$,
then $\varphi\in\text{Diff}^{r,\alpha}(\overline{\varOmega},\overline{\varOmega'})$
if $\varphi$ is a bijection from $\overline{\varOmega}$ to $\overline{\varOmega'}$
and $\varphi\in C^{r,\alpha}(\overline{\varOmega};\mathbb{R}^{n})$,
$\varphi^{-1}\in C^{r,\alpha}(\overline{\varOmega'};\mathbb{R}^{n})$.
\end{defn}
\noindent \textbf{Remark on notation. }Our definition of $C^{r,\alpha}(\overline{\varOmega})$
is slightly different from the more common one, which usually defines
$C^{r,\alpha}(\overline{\varOmega})$ as consisting of all restrictions
$f|_{\varOmega}$, where $f\in C^{0}(\overline{\varOmega})$ satisfies
conditions (1) to (3) above. The present definition is more convenient
in the following sense: if $f\in C^{r,\alpha}(\overline{\varOmega})$
and \textbf{$\varOmega$ }is\textbf{ }Lipschitz (i.e. $C^{0,1}$),
then $f$ has a $C^{r,\alpha}$-extension $\widetilde{f}$ to the
whole $\mathbb{R}^{n}$ (see  \cite[p.342]{CDK}).\footnote{\noindent All domains we will encounter are at least $C^{2,\alpha}$.}
By $C^{r}$ continuity, all the partial derivatives of $\widetilde{f}$
up to order $r$ at the points of $\partial\varOmega$ (i.e. the $r$-jets
of $f|_{\partial\varOmega}$) are uniquely determined by $f|_{\varOmega}$,
and therefore they are common to all possible $C^{r,\alpha}$-extensions
$\widetilde{f}$. Hence, it makes sense to evaluate all these partial
derivatives of $f$ on $\partial\varOmega$ and $f$ should be regarded
as the restriction to $\overline{\varOmega}$ of all possible $C^{r,\alpha}$
extensions of $f$ to the whole $\mathbb{R}^{n}$. If $u\in C^{r,\alpha}(\overline{\varOmega};\mathbb{R}^{n})$
and $r\geq1$, then the existence of these extensions for $u$ also
implies, for instance, that $\text{div }u$ and $\text{det}\,\nabla u$,
which are defined in $\varOmega$, have $C^{r-1,\alpha}$-extensions
to the whole $\mathbb{R}^{n}$, which are uniquely determined on $\partial\varOmega$
by $u$. Therefore, $\text{div }u$ and $\text{det}\,\nabla u$ should
also be seen as belonging to $C^{r-1,\alpha}(\overline{\varOmega})$.
In this context, we see that in Dacorogna-Moser theorem \cite[Theorem 1']{DM},
where $\varOmega$ is $C^{r+3,\alpha}$ and thus Lipschitz, the solution
$\varphi\in\text{Diff}^{r+1,\alpha}(\overline{\varOmega},\overline{\varOmega})$
actually satisfies

\[
\text{det\,}\nabla\varphi=f\text{ \,\,\,\,\,in \ensuremath{\overline{\varOmega}} }
\]
or simply
\[
\text{det\,}\nabla\varphi=f
\]
as both $\text{det\,}\nabla\varphi$ and $f$ are in $C^{0}(\overline{\varOmega})$
and $\text{det\,}\nabla\varphi=f$ in $\varOmega$. The above identity
actually means that $\text{det\,}\nabla\varphi$ and $f$ have $r$-jet
coincidence all over $\ensuremath{\overline{\varOmega}}$ (by $C^{r}$
continuity, these $r$-jets still coincide on $\partial\varOmega$).
Analogously, in \cite[Theorem 2]{DM} we actually have 
\[
\text{div\,}u=h\,\,\,\,\,\text{(in }\ensuremath{\overline{\varOmega}}\text{)}
\]
For brevity, we shall adopt from now on this natural convention: $f=g$
means that these two functions have the same domain and agree all
over it. 

The present definition of $C^{r,\alpha}(\overline{\varOmega})$ is
more consistent than the usual one, since adopting the later it is
still often necessary to evaluate $f\in C^{r,\alpha}(\overline{\varOmega})$
(and functions depending continuously on its $r$-jet) at points of
$\partial\varOmega$, while the domain of $f$ is actually $\varOmega$,
by definition. Moreover, for $u$ as above, $\partial\varOmega\subset\overline{\varOmega}$
immediately implies $\left\Vert u\right\Vert _{C^{r,\alpha}}=\left\Vert u\right\Vert _{C^{r,\alpha}(\varOmega)}:=\left\Vert u|_{\varOmega}\right\Vert _{C^{r,\alpha}}$,
where $\left\Vert u\right\Vert _{C^{r,\alpha}}=\left\Vert u\right\Vert _{C^{r}}+\underset{|b|=r}{\text{max}}\,[\partial^{b}u]_{C^{0,\alpha}(\overline{\varOmega})}$
(see \cite[p.336]{CDK}).

\section{\textbf{Optimal regularity relative Poincaré lemma}}
\begin{defn}
(\emph{Collar of} $\overline{\varOmega}$). If $\varOmega\subset\mathbb{R}^{n}$
is a smooth domain, there is a smooth embedding $\zeta:\partial\varOmega\times[0,\infty)\hookrightarrow\overline{\varOmega}$
such that $\zeta(x,0)=x$ (collar embedding \cite[Chapter 4]{HI}).
For each $\epsilon>0$ we call $U_{\epsilon}:=\zeta(\partial\varOmega\times[0,\epsilon${]})
a (compact)\emph{ collar of} $\overline{\varOmega}.$ Every neighbourhood
of $\partial\varOmega$ contains a collar and every collar is contained
in the relative interior of another collar.\emph{ }
\end{defn}
The following result is the key lemma in this note and it is interesting
on its own.
\begin{thm}
\emph{(Optimal regularity relative Poincaré lemma)}. Let $r\geq1$
and $1\leq k\leq n$ be integers and $0<\alpha<1$. Let $\varOmega\subset\mathbb{R}^{n}$,
$n\geq2$, be a bounded connected open smooth set and $U$ a collar
of $\overline{\varOmega}$. Then there is a constant $C=C(r,\alpha,U)>0$
such that: given a closed form $\beta\in C^{r,\alpha}(U;\varLambda^{k})$
that vanishes when pulledback to $\partial\varOmega$ (i.e. $i^{*}\beta=0$
where $i:\partial\varOmega\rightarrow\overline{\varOmega}$ is the
inclusion), there exists $\omega\in C^{r+1,\alpha}(\overline{\varOmega};\varLambda^{k-1})$
satisfying:
\begin{enumerate}
\item $d\omega=\beta$ in $U$
\item $\left\Vert \omega\right\Vert _{C^{r+1,\alpha}}\leq C\left\Vert \beta\right\Vert _{C^{r,\alpha}(U)}$
\end{enumerate}
Furthermore, the correspondence $\beta\rightarrow\omega$ can be chosen
linear and universal.
\end{thm}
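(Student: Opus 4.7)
The plan is to follow Matheus's observation and combine two known results: the standard relative Poincar\'e lemma (which loses no regularity but gains none either) with the global optimal regularity Poincar\'e lemma of Csat\'o, Dacorogna and Kneuss (which gains one derivative). First I would set up the standard homotopy operator $H$ associated to the collar retraction $\zeta(y,t)\mapsto\zeta(y,0)$, which is a linear, universal, bounded map $C^{r,\alpha}(U;\varLambda^k)\to C^{r,\alpha}(U;\varLambda^{k-1})$ satisfying the chain identity $dH+Hd=\text{id}-\pi^*i^*$. Combined with closedness of $\beta$ and the hypothesis $i^*\beta=0$, this yields an initial primitive $\omega_1:=H\beta\in C^{r,\alpha}(U;\varLambda^{k-1})$ with $d\omega_1=\beta$ on $U$, establishing in particular that $[\beta]=0$ in $H^k(U)$.

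Second, I would invoke the CDK global optimal regularity Poincar\'e lemma to produce a linear and universal bounded operator $T$ with $T\beta\in C^{r+1,\alpha}(U;\varLambda^{k-1})$, $d(T\beta)=\beta$ on $U$, and
\[
\|T\beta\|_{C^{r+1,\alpha}(U)}\leq C\,\|\beta\|_{C^{r,\alpha}(U)}.
\]
If the version of the CDK lemma available requires contractibility of the underlying domain, I would cover $U$ by finitely many contractible coordinate tubes, apply CDK locally on each, and patch using a smooth partition of unity together with the $\omega_1$ from the previous step (whose role is precisely to cancel the Cech cohomological defect between the local primitives). Linearity and universality of the resulting operator then follow from linearity and universality of each piece.

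Third, since the target form must live on all of $\overline\varOmega$ and not only on $U$, I would extend $T\beta$ from $U$ using a bounded, universal linear extension operator $E\colon C^{r+1,\alpha}(U;\varLambda^{k-1})\to C^{r+1,\alpha}(\overline\varOmega;\varLambda^{k-1})$ (for instance, a Whitney-type extension, available since $\overline\varOmega$ is smooth). Setting $\omega:=E(T\beta)$ gives $\omega\in C^{r+1,\alpha}(\overline\varOmega;\varLambda^{k-1})$ with $d\omega=\beta$ on $U$; the estimate in conclusion (2) follows by composing the operator norm bounds, and linearity and universality of the map $\beta\mapsto\omega$ are inherited from $H$, $T$, and $E$.

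The main obstacle I anticipate is the second step: producing a \emph{linear and universal} optimal regularity primitive on the (possibly non-contractible) collar $U$, not merely an existence statement. Universality in $r$ and $\alpha$ is indispensable here, as stressed in Section 1.1, because the subsequent proof of Theorem~4 (adapting \cite[Lemma 4]{DM}) would collapse if the resulting operator depended on the H\"older parameters.
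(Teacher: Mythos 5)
Your proposal follows essentially the same three-step route as the paper's proof: a crude primitive obtained from the standard relative Poincar\'e lemma via the collar retraction, then the Csat\'o--Dacorogna--Kneuss global optimal-regularity Poincar\'e lemma applied on $\mathrm{int}\,U$, and finally a universal bounded linear extension operator to all of $\overline{\varOmega}$. The only point you leave implicit is that the crude primitive is used precisely to verify, by integration by parts, the orthogonality of $\beta$ to the harmonic fields $\mathscr{H}_{N}(\mathrm{int}\,U;\varLambda^{k})$ required as the hypothesis of the CDK theorem (which needs no contractibility of $U$, so your partition-of-unity fallback --- which would in any case not directly yield a primitive, since $d(\sum\eta_{i}\gamma_{i})$ picks up terms $d\eta_{i}\wedge\gamma_{i}$ --- is unnecessary).
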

\begin{rem}
(Universality). The above correspondence is \emph{universal }in the
sense that $\omega$ depends only on $\beta$ and $\varOmega$, but
not on $r,\,\alpha$. More precisely, if $\beta$ also belongs to
class $C^{s,\delta}$, $s\in\mathbb{Z}^{+}$ and $0<\delta<1,$ then
the same solution $\omega$ is of class $C^{s+1,\delta}$ and the
corresponding estimate (2) holds for some constant $C=C(s,\delta,U)>0$.
In particular, $\omega$ is $\text{smooth if \ensuremath{\beta} is smooth.}$
The universality of $\beta\rightarrow\omega$ will be used in the
proof of  Theorem 3.
\end{rem}
\begin{proof}
(A) $\beta$ \emph{has a $C^{1}$ primitive $\omega_{0}$ in $U$}
(i.e. $d\omega_{0}=\beta$). 

Since $U$ is a collar, $\beta$ is $C^{1}$, $d\beta=0$ and $\beta$
vanishes when pulledback to $\partial\varOmega$, $\beta$ has a primitive
$\omega_{0}$ of class $C^{1}$ on $U$. This is immediate to verify
following the standard proof of the relative Poincaré lemma, which
uses the homotopy formula with integration along time fibres. Alternatively,
this fact immediately follows from the more general result \cite[Theorem 17.3]{CDK},
taking Remark 17.4 into consideration (briefly, let $\zeta$ be the
map defining the collar and $\zeta^{-1}(x)=(\psi(x),\tau(x))$, where
$\psi(x)\in\partial\varOmega$ and $\tau(x)\in[0,\epsilon]$. Define
the smooth map
\[
\begin{array}{lll}
F_{t}(x)=F(t,x):\,[0,1]\times U & \longrightarrow & U\\
\,\,\,\,\,\,\,\,\,\,\,\,\,\,\,\,\,\,\,\,\,\,\,\,\,\,\,\,\,\,\,\,\,\,\,\,\,\,\,\,\,\,\,\,\,\,\,\,\,\,\,\,\,\,\,(t,\,x) & \longmapsto & \zeta(\psi(x),t\tau(x))
\end{array}
\]
Then, (i) $\beta$ is closed, (ii) $F_{1}^{*}(\beta)=\beta$ since
$F_{1}=\text{id}$ and (iii) $F_{0}^{*}(\beta)=0$ since $F_{0}(U)\subset\partial\varOmega$
and $\beta$ vanishes when pulledback to $\partial\varOmega$. Therefore
(\cite[Theorem 17.3]{CDK}), there exists on $U$ a $(k-1)$-form
$\omega_{0}$ of class $C^{1}$ satisfying $d\omega_{0}=\beta$).

(B) \emph{Finding a $C^{r+1,\alpha}$ primitive $\widehat{\omega}$
of $\beta$ in $U$ with control of the norm. }

Now observe that $\varOmega':=\text{int}\,U$ (in $\mathbb{R}^{n}$)
and $\beta$ satisfy the hypothesis of \cite[Theorem 8.3]{CDK} since
$d\beta=0$ and for every $\psi\in\mathcal{\mathscr{H}}_{N}(\text{int}\,U,\varLambda^{k})$
(noting that each such $\psi$ extends to $\widetilde{\psi}\in C^{\infty}(U,\varLambda^{k})$
\cite[p.122]{CDK}), we have, integrating by parts \cite[p.88]{CDK}
and using the primitive $\omega_{0}$ found in (A), 
\[
\int_{\text{int\,}U}\left\langle \beta,\psi\right\rangle =\int_{\text{int}\,U}\left\langle d\omega_{0},\psi\right\rangle =-\int_{\text{int}\,U}\left\langle \omega_{0},\delta\psi\right\rangle +\int_{\partial U}\left\langle \omega_{0},\nu\lrcorner\psi\right\rangle =0
\]
as $\delta\psi=0=\nu\lrcorner\psi$ by definition of $\mathcal{\mathscr{H}}_{N}$.
Therefore $\beta$ has a $C^{r+1,\alpha}$ primitive $\widehat{\omega}$
on $U$ satisfying
\[
\left\Vert \widehat{\omega}\right\Vert _{C^{r+1,\alpha}(U)}\leq C_{1}\left\Vert \beta\right\Vert _{C^{r,\alpha}(U)}
\]
where $C_{1}=C_{1}(r,\alpha,$$U)>0$ is the constant given in \cite[Theorem 8.3]{CDK}.

(C) \emph{Universal extension of $\widehat{\omega}$ to the whole
$\overline{\varOmega}$ with control of the $C^{r+1,\alpha}$ norm.}
As a $k$-form\emph{ }is completely determined by its $\tbinom{n}{k}$
components and $U$ is smooth, there is a universal linear operator
(see \cite[p.342]{CDK})
\[
E:\,C^{r+1,\alpha}(U;\varLambda^{k-1})\longrightarrow C^{r+1,\alpha}(\overline{\varOmega};\varLambda^{k-1})
\]
and a constant $C_{2}=C_{2}(r,U)\geq1$ such that 
\[
E(\gamma)|_{U}=\gamma\text{\, and \,}\left\Vert E(\gamma)\right\Vert _{C^{r+1,\alpha}}\leq C_{2}\left\Vert \gamma\right\Vert _{C^{r+1,\alpha}(U)}
\]
hence the extension $\omega=E(\widehat{\omega})\in C^{r+1,\alpha}(\overline{\varOmega};\varLambda^{k-1})$
satisfies 
\[
\left\Vert \omega\right\Vert _{C^{r+1,\alpha}}\leq C_{1}C_{2}\left\Vert \beta\right\Vert _{C^{r,\alpha}(U)}
\]
Therefore, $d\omega=\beta$ in $U$ and there is $C=C(r,\alpha,U)>0$
as claimed. Since $\beta\rightarrow\widehat{\omega}$ and $\widehat{\omega}\rightarrow\omega$
are both linear and universal (see \cite[p.148-149]{CDK}) so is $\beta\rightarrow\omega$.
\end{proof}
\begin{rem}
Actually, in the above situation, a much simpler extension operator
could be used. If $h\in C^{r,\alpha}$($\overline{\varOmega}$) and
$\varOmega$ is $C^{r,\alpha}\cap C^{1}$, then $h$ can be $C^{r,\alpha}$-extended
by a bounded linear operator as in \cite[p.136]{GT}. But even if
the domain $\varOmega$ is smooth, this extension operator actually
fails to be universal as $E(\gamma$) depends on $r$. To render it
universal (in the case $\varOmega$ is smooth), instead of balls and
half-balls we use (open) cubes $(-2,2)^{n}$ and halfcubes $[0,2)\times(-2,2)^{n-1}$
for the smooth boundary rectification, and use Seeley's operator \cite{SE}
(c.f. also \cite{BI}) to extend functions from the right halfcube
to the cube, noting that, by construction, Seeley's extension of a
function $h$ to the left halfline $(-\infty,0]\times y$ depends
only on the values taken by $h$ on the interval $[0,2)\times y$.
This operator provides a simultaneous bounded linear extension in
all classes of differentiability.
\end{rem}

\section{\textbf{The linearized problem when $h=0$ in a collar}}
\begin{thm}
Let $r\geq0$ be an integer and $0<\alpha<1$. Let $\varOmega\subset\mathbb{R}^{n}$,
$n\geq2$, be a bounded connected open smooth set and $U$ a collar
of $\overline{\varOmega}$. Given $h\in C^{r,\alpha}(\overline{\varOmega})$
satisfying
\[
\begin{cases}
\int_{\varOmega}h=0 & \text{ }\\
h=0 & \text{in \ensuremath{U}}
\end{cases}
\]
there exists $u\in C^{r+1,\alpha}(\overline{\varOmega};\mathbb{R}^{n})$
satisfying 
\[
\begin{cases}
\text{\emph{div} }u=h & \text{ }\\
u=0 & \text{in }\ensuremath{U}
\end{cases}
\]
Furthermore, the correspondence $h\rightarrow u$ can be chosen linear
and universal and there exists $C=C(r,\alpha,U,\varOmega)>0$ such
that
\[
\left\Vert u\right\Vert _{C^{r+1,\alpha}}\leq C\left\Vert h\right\Vert _{C^{r,\alpha}}
\]
\end{thm}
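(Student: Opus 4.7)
My plan is to follow the strategy sketched in Section~1.2: build a preliminary solution $u_0$ using the Dacorogna-Moser divergence operator, then subtract from it a universally defined, divergence-free vector field $\widetilde{u_0}$ that coincides with $u_0$ throughout the collar $U$. Step one is to apply \cite[Theorem~2]{DM} directly to $h$, obtaining $u_{0} \in C^{r+1,\alpha}(\overline{\varOmega};\mathbb{R}^{n})$ with $\text{div}\,u_{0}=h$ in $\varOmega$, $u_{0}=0$ on $\partial\varOmega$, and $\|u_{0}\|_{C^{r+1,\alpha}}\le C_{0}\|h\|_{C^{r,\alpha}}$. Inspection of the proof in \cite{DM} (flagged in the introduction and its Footnote~3) shows that the underlying solution operator $h\mapsto u_{0}$ is linear and universal in the sense of Remark~3. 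Since $h=0$ in $U$, $u_{0}$ is automatically divergence-free there.

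Next, I would construct $\widetilde{u_0}$ through the duality between vector fields and $(n-1)$-forms on $\mathbb{R}^n$. Let $\omega_{0}:=dx_{1}\wedge\dots\wedge dx_{n}$ and set $u^{*}:=u_{0}\lrcorner\,\omega_{0}\in C^{r+1,\alpha}(U;\varLambda^{n-1})$. Then $du^{*}=(\text{div}\,u_{0})\,\omega_{0}=0$ in $U$, and $u_{0}=0$ on $\partial\varOmega$ gives $i^{*}u^{*}=0$, so $u^{*}$ satisfies the hypotheses of Theorem~2. Applying Theorem~2 with regularity index $r+1$ and form degree $k=n-1$ yields $\gamma\in C^{r+2,\alpha}(\overline{\varOmega};\varLambda^{n-2})$ with $d\gamma=u^{*}$ in $U$ and $\|\gamma\|_{C^{r+2,\alpha}}\le C_{1}\|u^{*}\|_{C^{r+1,\alpha}(U)}$, through a universal linear map. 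I would then reverse the duality: define $\widetilde{u_{0}}\in C^{r+1,\alpha}(\overline{\varOmega};\mathbb{R}^{n})$ as the unique vector field with $\widetilde{u_{0}}\lrcorner\,\omega_{0}=d\gamma$. Because $d(d\gamma)=0$, $\widetilde{u_{0}}$ is divergence-free on all of $\overline{\varOmega}$; and in $U$ the identities $\widetilde{u_{0}}\lrcorner\,\omega_{0}=d\gamma=u^{*}=u_{0}\lrcorner\,\omega_{0}$ force $\widetilde{u_{0}}=u_{0}$.

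Setting $u:=u_{0}-\widetilde{u_{0}}$ then delivers the desired solution: $u\in C^{r+1,\alpha}(\overline{\varOmega};\mathbb{R}^{n})$, $\text{div}\,u=h$, $u\equiv 0$ in $U$, and $\|u\|_{C^{r+1,\alpha}}\le(1+C_{1}C_{0})\|h\|_{C^{r,\alpha}}$ for a suitable $C=C(r,\alpha,U,\varOmega)>0$. The map $h\mapsto u$ is the composition of four linear universal operations — the Dacorogna-Moser operator, the algebraic dualities $v\leftrightarrow v\lrcorner\,\omega_{0}$, and the Poincaré operator of Theorem~2 — so it is itself linear and universal. The only delicate point, more bookkeeping than substance, is ensuring universality at every stage: for Theorem~2 this is built into the statement, but for \cite[Theorem~2]{DM} it must be extracted from the proof rather than read off the statement, and without it the cascading use of $h\mapsto u$ in the proof of Theorem~4 (where the datum is regularized and passed through this operator at varying regularities) would break down.
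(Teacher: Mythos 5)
Your proposal is correct and follows essentially the same route as the paper's proof: a first solution $u_{0}$ from the Dacorogna--Moser divergence operator (the paper cites the improved version \cite[Theorem 9.2]{CDK} and justifies its universality in a footnote, exactly as you flag), then the duality $v\leftrightarrow v\,\lrcorner\,\omega$ combined with Theorem~2 to produce a globally divergence-free extension of $u_{0}|_{U}$, and finally $u=u_{0}-\widetilde{u_{0}}$. The only cosmetic difference is that the paper packages the second step as an abstract universal extension operator $H(\cdot)$ acting on any divergence-free field vanishing on $\partial\varOmega$, whereas you apply the construction directly to $u_{0}|_{U}$.
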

\begin{rem}
(Universality). The above correspondence is universal\emph{ }in the
sense that $u$ depends only on $h$ and $U$ (to say that $u$ depends
on $\varOmega$ is redundant since $\varOmega=\text{dom}\,h$), but
not on $r,\,\alpha$. More precisely, if $h$ also belongs to class
$C^{s,\delta}$, $s\in\mathbb{N}_{0}$ and $0<\delta<1,$ then the
same solution $u$ is $C^{s+1,\delta}$ and the corresponding estimate
holds for some constant $C=C(s,\delta,U)>0$. In particular, $u$
is $\text{smooth if \ensuremath{h} is smooth.}$ The universality
of $h\rightarrow u$ will be used in the proof of Theorem 4.
\end{rem}
\begin{proof}
(A) \emph{Reduction to existence of a divergence-free extension universal
bounded linear operator.} Theorem 9.2 in \cite[p.180]{CDK} provides
a first solution to the problem, $u_{0}\in C^{r+1,\alpha}(\overline{\varOmega};\mathbb{R}^{n})$,
which however only guarantees that $u_{0}=0$ on $\partial\varOmega$.
Moreover, the correspondence $h\rightarrow u_{0}$ is linear and universal\footnote{When $\varOmega$ is smooth, the correspondence $h\rightarrow u_{0}$
given by \cite[Theorem 9.2]{CDK} can be made universal in the sense
that $u_{0}$ can be made to depend only on $h$ but not on $r$ and
$\alpha$, as made precise in Remark 3. This is easy to verify inspecting
its proof: in Step 1, the Neumann problem with condition $\int_{\varOmega}w=0$
has a unique solution, which guarantees the universality of $h\rightarrow w$.
In Step 2, opting for Proof 2 of Lemma 8.8 \cite[p.150]{CDK} and
taking smooth admissible boundary coordinate systems $\varphi_{i}$,
the correspondence $w\rightarrow v$ is also clearly universal.} and there is a constant $C_{1}=C_{1}(r,\alpha,\varOmega)>0$ such
that
\[
\left\Vert u_{0}\right\Vert _{C^{r+1,\alpha}}\leq C_{1}\left\Vert h\right\Vert _{C^{r,\alpha}}
\]
Since $\text{div}\,u_{0}=h=0$ in $U$ (see Remark on notation, Section
2.2), in order to find $u$ it is enough to construct an universal
(bounded) linear operator $H(\cdot)$ extending each divergence-free
$X\in C^{r+1,\alpha}(U;\mathbb{R}^{n})$ vanishing on $\partial\varOmega$,
to a divergence-free $H(X)\in C^{r+1,\alpha}(\overline{\varOmega};\mathbb{R}^{n})$
such that 
\[
\left\Vert H(X)\right\Vert _{C^{r+1,\alpha}}\leq C_{2}\left\Vert X\right\Vert _{C^{r+1,\alpha}(U)}
\]
for some constant $C_{2}=C_{2}(r,\alpha,U)>0$, for it is then immediate
to verify that $u:=u_{0}-H(u_{0}|_{U})$ is the desired solution and
$C=C(r,\alpha,U,\varOmega)=C_{1}(1+C_{2})$. As $h\rightarrow u_{0}$
and $u_{0}\rightarrow u$ are both linear and universal so is $h\rightarrow u$. 

(B) \emph{Construction of operator $H(\cdot)$.} Suppose that $X\in C^{r+1,\alpha}(U;\mathbb{R}^{n})$
is divergence-free and vanishes on $\partial\varOmega$. In order
to construct $\widetilde{X}:=H(X)$ we use, as in \cite[Theorem 3]{AV},
the isomorphism between $C^{r+1,\alpha}$ divergence-free vector fields
and $C^{r+1,\alpha}$ closed $(n-1)$-forms given by
\[
X\longleftrightarrow X^{*}=X\,\lrcorner\,\omega
\]
where $\omega$ is the standard volume form on $\mathbb{R}^{n}$,
which immediately gives
\begin{itemize}
\item $X^{*}\in C^{r+1,\alpha}(U;\varLambda^{n-1})$
\item $dX^{*}=(\text{div}\,X)\omega=0$
\item $X^{*}=0$ in $\partial\varOmega$ (as $X=0$ there)
\item $\left\Vert X\right\Vert _{C^{r+1,\alpha}(U)}=\left\Vert X^{*}\right\Vert _{C^{r+1,\alpha}(U)}$
\end{itemize}
Applying Theorem 2 to $X^{*}$ we find $\gamma\in C^{r+2,\alpha}(\overline{\varOmega};\varLambda^{n-2})$
and a constant $C_{3}=C_{3}(r,\alpha,U)>0$ such that $d\gamma=X^{*}$
in $U$ and 
\[
\left\Vert \gamma\right\Vert _{C^{r+2,\alpha}}\leq C_{3}\left\Vert X^{*}\right\Vert _{C^{r+1,\alpha}(U)}
\]
Moreover, the correspondence $X^{*}\rightarrow\gamma$ is both linear
and universal. We now go in the opposite direction:
\begin{itemize}
\item $d\gamma\in C^{r+1,\alpha}(\overline{\varOmega};\varLambda^{n-1})$
is a closed form
\item $d\gamma=X^{*}$ in $U$
\item $\left\Vert d\gamma\right\Vert _{C^{r+1,\alpha}}\leq(n-1)\left\Vert \gamma\right\Vert _{C^{r+2,\alpha}}$
\end{itemize}
\noindent and $d\gamma$ corresponds, under the isomorphism described
above, to a divergence-free $\widetilde{X}\in C^{r+1,\alpha}(\overline{\varOmega};\mathbb{R}^{n})$,
$\widetilde{X}|_{U}=X$. The correspondence $X\rightarrow\widetilde{X}$
is linear and universal, being a composition of universal linear operators
\[
X\longrightarrow X^{*}\longrightarrow\gamma\longrightarrow d\gamma\longrightarrow\widetilde{X}=H(X)
\]
Following the above chain we readily get $C_{2}=(n-1)C_{3}$ ($n=\text{dim}\,\varOmega$)
and $H(\cdot)$ is as claimed.
\end{proof}

\section{\textbf{Solution when $f=1$ in a collar and $\left\Vert f-1\right\Vert _{C^{0,\gamma}}$
is small} }

Directly following Dacorogna-Moser original proof, a first solution
is found under the assumption that $\left\Vert f-1\right\Vert _{C^{0,\gamma}}$
is sufficiently small, for some $0<\gamma\leq\alpha$ (actually, under
a slightly more general hypothesis). An advantage of this preliminary
solution is that it automatically gives useful estimates on the Hölder
norms of $\varphi-\text{id}$, and for this reason we rather follow
the proof of Theorem 10.9 in \cite[p.198]{CDK} (which improves Lemma
4 in \cite[p.10]{DM}). The only change needed in that proof essentially
amounts to an obvious and rather trivial modification in the definitions
of the functional spaces $X$, $Y$ and $B$ (all references below
are to \cite[p.198-201]{CDK}). We call the reader's attention to
the fact that the universality of the operator constructed in Theorem
3 above is used in an essential way below. 
\begin{thm}
Let $r\geq0$ be an integer and $0<\alpha,\gamma<1$ with $\gamma\leq r+\alpha$.
Let $\varOmega\subset\mathbb{R}^{n}$, $n\geq2$, be a bounded connected
open smooth set and $U$ a collar of $\overline{\varOmega}$. Then,
there are constants $\epsilon=\epsilon(r,\alpha,\gamma,U,\varOmega)>0$
and $c=c(r,\alpha,\gamma,U,\varOmega)>0$ such that: given $f\in C^{r,\alpha}(\overline{\varOmega})$,
$f>0$ in $\overline{\varOmega}$, satisfying
\[
\begin{cases}
\int_{\varOmega}f=\text{\emph{meas}}\,\varOmega & \text{ }\\
f=1 & \text{in \ensuremath{U}}\\
\left\Vert f-1\right\Vert _{C^{0,\gamma}}\leq\epsilon
\end{cases}
\]
there exists $\varphi\in\text{\emph{Diff}}{}^{r+1,\alpha}(\overline{\varOmega},\overline{\varOmega})$
satisfying
\begin{equation}
\begin{cases}
\text{\emph{det}}\,\nabla\varphi=f & (\text{in \ensuremath{\overline{\varOmega}}})\\
\varphi=\text{\emph{id}} & \text{in }U
\end{cases}
\end{equation}

\[
\left\Vert \varphi-\text{\emph{id}}\right\Vert _{C^{r+1,\alpha}}\leq c\left\Vert f-1\right\Vert _{C^{r,\alpha}}\text{ \,\,and \,\,\,}\left\Vert \varphi-\text{\emph{id}}\right\Vert _{C^{1,\gamma}}\leq c\left\Vert f-1\right\Vert _{C^{0,\gamma}}
\]
\end{thm}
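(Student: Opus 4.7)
The plan is to adapt the contraction mapping argument of Dacorogna--Moser (in its refined form \cite[Theorem~10.9]{CDK}), redesigning the three ambient Banach spaces to incorporate the collar-support condition, and using Theorem~3 as the linear solver in place of \cite[Theorem~9.2]{CDK}. Writing $\varphi = \text{id} + v$, the equation $\det\nabla\varphi = f$ becomes
\[
\text{div}\,v = (f-1) - Q(v), \qquad Q(v) := \det(I+\nabla v) - 1 - \text{div}\,v,
\]
where $Q$ is a polynomial in the entries of $\nabla v$ all of whose terms have degree $\geq 2$. I would cast this as the fixed-point equation $v = T(v) := L\big((f-1) - Q(v)\big)$, with $L$ the universal solver of Theorem~3, and solve it by Banach contraction.

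Concretely, I would work with
\[
X := \{v \in C^{r+1,\alpha}(\overline{\varOmega};\mathbb{R}^n) : v = 0 \text{ in } U\},
\]
\[
Y := \{h \in C^{r,\alpha}(\overline{\varOmega}) : \textstyle\int_\varOmega h = 0 \text{ and } h = 0 \text{ in } U\},
\]
and the closed ball $B := \{v \in X : \Vert v\Vert_{C^{1,\gamma}} \leq \rho\}$ for a small $\rho>0$ to be chosen. The first check is $(f-1) - Q(v) \in Y$: the $U$-vanishing is automatic, since $v \equiv 0$ in $U$ forces $\nabla v \equiv 0$ there, giving $Q(v) = 0$ in $U$, while $f-1 = 0$ in $U$ by hypothesis; the integral condition reduces, via the divergence theorem applied to $\text{div}\,v$ (valid as $v$ vanishes near $\partial\varOmega$), to $\int_\varOmega f = \int_\varOmega \det(I+\nabla v)$, which holds because $\int_\varOmega f = \text{meas}\,\varOmega$ and because $(\text{id}+v)(\varOmega) = \varOmega$ (forced by the collar vanishing of $v$ and the smallness of its $C^1$ norm).

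Next I would establish the two standard nonlinear bounds on $Q$ in the spirit of \cite[p.200]{CDK}: a high-regularity estimate of the form $\Vert Q(v)\Vert_{C^{r,\alpha}} \leq C\Vert v\Vert_{C^{1}}\Vert v\Vert_{C^{r+1,\alpha}}$ and a low-regularity Lipschitz estimate $\Vert Q(v_1)-Q(v_2)\Vert_{C^{0,\gamma}} \leq C\rho\Vert v_1-v_2\Vert_{C^{1,\gamma}}$ for $v_1,v_2 \in B$, both obtained by Hölder product and interpolation inequalities applied to the polynomial structure of $Q$. Combined with the boundedness of $L$ provided by Theorem~3, these yield, for $\epsilon$ and $\rho$ chosen small enough (both depending on the same parameters appearing in the statement), that $T$ stabilizes $B$ and is a strict contraction on $B$ in the $C^{1,\gamma}$ norm. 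Banach's theorem then produces a fixed point $v \in B$; the smallness of $\Vert\nabla v\Vert_{C^0}$ ensures that $I+\nabla v$ is pointwise invertible, so that $\varphi := \text{id}+v$ is a $C^{r+1,\alpha}$ diffeomorphism of $\overline{\varOmega}$, and the two claimed estimates follow at once from the fixed-point identity together with the bounds on $L$ in the two Hölder scales.

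The main obstacle, and indeed the single nontrivial additional ingredient beyond the Dacorogna--Moser template, is that the \emph{same} operator $L$ must be bounded simultaneously on the high-regularity pair $(C^{r,\alpha},C^{r+1,\alpha})$ (needed to propagate the optimal regularity of $f$ onto $\varphi$) and on the low-regularity pair $(C^{0,\gamma},C^{1,\gamma})$ (on which the contraction is actually run). This is precisely the point of the universality assertion in Theorem~3 and its attached footnote: without a single solver working in both scales, the $C^{1,\gamma}$ fixed point could not be shown to inherit the $C^{r+1,\alpha}$ regularity of the data. Modulo this observation, and the support-preserving character of both $Q$ and $L$ that makes the modified function spaces work, the proof is essentially a verbatim transcription of \cite[Theorem~10.9]{CDK}.
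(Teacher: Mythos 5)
Your proposal follows essentially the same route as the paper: redefine $X$, $Y$ and $B$ from \cite[Theorem~10.9]{CDK} by replacing the boundary conditions with vanishing in the collar $U$, use Theorem~3 as the universal right inverse of the divergence in both Hölder scales, and run the Banach contraction unchanged. One small imprecision: your $B$ should also carry the bound $\left\Vert v\right\Vert _{C^{r+1,\alpha}}\leq 2K_{1}\left\Vert f-1\right\Vert _{C^{r,\alpha}}$ as in \cite[p.200]{CDK}, since a mere $C^{1,\gamma}$-ball inside $X$ is not complete for the $C^{1,\gamma}$ norm (the $C^{1,\gamma}$-limit of elements of $X$ need not be $C^{r+1,\alpha}$), whereas the $C^{r+1,\alpha}$-bounded, $C^{1,\gamma}$-closed set is.
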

\begin{rem}
See Remark on notation (Section 2) for the identity $\text{det}\,\nabla\varphi=f$
in $\overline{\varOmega}$ above. 
\end{rem}
\begin{rem}
The following fundamental result on the inclusion of Hölder spaces
will be often implicitly used without mention: if the domain $\varOmega\subset\mathbb{R}^{n}$
is Lipschitz, $0\leq\widetilde{s}\leq s$ are integers and $0\leq\beta,\widetilde{\beta}\leq1$,
with $\widetilde{s}+\widetilde{\beta}\leq s+\beta$, then $C^{s,\beta}(\overline{\varOmega})\subset C^{\widetilde{s},\widetilde{\beta}}(\overline{\varOmega})$
\cite[p.342]{CDK}.
\end{rem}
\begin{proof}
\emph{Step 1. }Let
\[
X=\left\{ a\in C^{r+1,\alpha}(\overline{\varOmega};\mathbb{R}^{n}):\,\,\,a=0\text{ \,\,in }U\right\} 
\]

\[
Y=\left\{ b\in C^{r,\alpha}(\overline{\varOmega}):\,\,\,\int_{\varOmega}b=0\text{ and }b=0\text{ \,\,in }U\right\} 
\]
By the divergence theorem, $L(a)=\text{div}\,a$ is a well defined
bounded linear operator $L:X\rightarrow Y$ . By Theorem 3 there is
a bounded linear operator $L^{-1}:Y\rightarrow X$ such that $LL^{-1}=\text{id on \ensuremath{Y} and for which }$the
correspondence $b\rightarrow a$ is universal (i.e. $a$ only depends
on $b$ but not on $r,\alpha$, see Remark 3), therefore 10.16 and
10.17 in the original proof are simultaneously satisfied for $K_{1}=\text{max}(C(r,\alpha,U,\varOmega),C(0,\gamma,U,\varOmega))$,
where the constants are provided by Theorem 3 (this fact is implicit
but not mentioned in \cite{CDK}). 

\emph{Step 2. }For any real \emph{$n\times n$ }matrix $\xi$ let
\[
Q(\xi)=\text{det}\,(I+\xi)-1-\text{trace}(\xi)
\]
where $I$ is the identity matrix. Observe that a solution to (5.1)
is given by $\varphi:=v+\text{id}$ provided $v\in C^{r+1,\alpha}(\overline{\varOmega};\mathbb{R}^{n})$
satisfies
\begin{equation}
\begin{cases}
\text{div}\,v=f-1-Q(\nabla v)\\
v=0 & \text{in }U
\end{cases}
\end{equation}
Letting $N(v)=f-1-Q(\nabla v)$, it is immediate to verify that (5.2)
is satisfied by any fixed point of the nonlinear operator $L^{-1}N:X\rightarrow X$.
By Banach's theorem, it remains to find a subset $B$ of $X$, complete
in relation to an adequate norm and such that $L^{-1}N$ maps $B$
into itself and acts there as a contraction ($N:X\rightarrow Y$ is
well defined since $N(v)=0$ in $U$ for any $v\in X$ (as both $f=1$
and $v=0$ in $U$) and $\int_{\varOmega}N(v)=0$, see \cite{CDK}).
Define $B$ as in \cite[p.200]{CDK} changing only $u=0$ in $\partial\varOmega$
to $u=0$ in $U$ and again endow it with the $C^{1,\gamma}$ norm
(note that all this is consistent since, by hypothesis, $1+\gamma\leq r+1+\alpha$
and $\varOmega$ is smooth, thus $C^{r+1,\alpha}(\overline{\varOmega})\subset C^{1,\gamma}(\overline{\varOmega})$,
see Remark 5). Then, $B$ is complete being a closed subset of the
original $B$ (by $C^{0}$ continuity). The remaining of the original
proof (including all estimates) is unchanged. 
\end{proof}

\section{\textbf{Solution when $f=1$ in a neighbourhood of a collar}}
\begin{thm}
\emph{(Moser's flow method solution) }Let $r\geq1$ be an integer
and $0\leq\alpha\leq1$. Let $\varOmega\subset\mathbb{R}^{n}$, $n\geq2$,
be a bounded connected open smooth set and $U$ a collar of $\overline{\varOmega}$.
Given $f\in C^{r,\alpha}(\overline{\varOmega})$, $f>0$ in $\overline{\varOmega}$,
satisfying
\[
\begin{cases}
\int_{\varOmega}f=\text{\emph{meas}}\,\varOmega & \text{ }\\
f=1 & \text{in \ensuremath{U}}
\end{cases}
\]
there exists $\varphi\in\text{\emph{Diff}}{}^{r,\alpha}(\overline{\varOmega},\overline{\varOmega})$
satisfying
\[
\begin{cases}
\text{\emph{det}}\,\nabla\varphi=f\\
\varphi=\text{\emph{id}} & \text{in }U
\end{cases}
\]
\end{thm}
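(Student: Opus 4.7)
The plan is to apply Moser's flow method, using Theorem 3 as the support-preserving linearized solver. Let $\omega$ denote the standard volume form on $\mathbb{R}^{n}$ and interpolate between $1$ and $f$ by
\[
f_{t}:=(1-t)+tf,\qquad\omega_{t}:=f_{t}\omega,\qquad t\in[0,1],
\]
so that $\omega_{0}=\omega$ and $\omega_{1}=f\omega$. Since $f>0$ on the compact set $\overline{\varOmega}$ and $f_{t}$ is a convex combination of $1$ and $f$, we have $f_{t}\geq\min\{1,\min_{\overline{\varOmega}}f\}>0$ uniformly in $t$; moreover $f_{t}\equiv 1$ on $U$ for all $t$.

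The heart of the argument is to produce a time-dependent vector field $v_{t}$ on $\overline{\varOmega}$, vanishing in $U$, satisfying $L_{v_{t}}\omega_{t}+\dot{\omega}_{t}=0$. Since $\dot{\omega}_{t}=(f-1)\omega$ and, by the product rule, $L_{v_{t}}\omega_{t}=\text{div}(f_{t}v_{t})\,\omega$, this reduces to solving $\text{div}(f_{t}v_{t})=1-f$. Crucially, the right-hand side is \emph{independent of $t$}, has zero mean over $\varOmega$, and vanishes in $U$, so I would apply Theorem 3 a single time to $h:=1-f$ to obtain $u\in C^{r+1,\alpha}(\overline{\varOmega};\mathbb{R}^{n})$ with $\text{div}\,u=1-f$ and $u\equiv 0$ in $U$, and then set $v_{t}:=u/f_{t}\in C^{r,\alpha}(\overline{\varOmega};\mathbb{R}^{n})$.

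It remains to integrate. Let $\psi_{t}$ be the flow of $v_{t}$ starting from $\psi_{0}=\text{id}$. Because $v_{t}$ vanishes in a neighbourhood of $\partial\varOmega$, standard ODE theory ensures $\psi_{t}$ is defined for all $t\in[0,1]$, fixes $U$ pointwise, and is a diffeomorphism of $\overline{\varOmega}$ onto itself; $C^{r,\alpha}$-regularity of the flow of a $C^{r,\alpha}$ vector field (valid once $r\geq 1$) yields $\psi_{t}\in\text{Diff}^{r,\alpha}(\overline{\varOmega},\overline{\varOmega})$. By the Lie derivative identity for time-dependent forms,
\[
\tfrac{d}{dt}(\psi_{t}^{*}\omega_{t})=\psi_{t}^{*}(L_{v_{t}}\omega_{t}+\dot{\omega}_{t})=0,
\]
so $\psi_{t}^{*}\omega_{t}\equiv\omega$ on $[0,1]$. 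Evaluating at $t=1$ gives $\psi_{1}^{*}(f\omega)=\omega$, and setting $\varphi:=\psi_{1}^{-1}$ yields $\varphi^{*}\omega=f\omega$, i.e.\ $\text{det}\,\nabla\varphi=f$, together with $\varphi=\text{id}$ in $U$.

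The main technical subtlety I expect is the regularity bookkeeping: dividing $u\in C^{r+1,\alpha}$ by $f_{t}\in C^{r,\alpha}$ costs one derivative, so $v_{t}$ is only $C^{r,\alpha}$, which is exactly what forces the hypothesis $r\geq 1$ and produces the matching output $\varphi\in\text{Diff}^{r,\alpha}$ (rather than the $\text{Diff}^{r+1,\alpha}$ of the main theorem). The endpoint cases $\alpha\in\{0,1\}$, outside the scope of Theorem 3 as stated, should be reachable by Hölder inclusions (applying the construction at some auxiliary $\alpha'\in(0,1)$ and inheriting the stated regularity on $\overline{\varOmega}$).
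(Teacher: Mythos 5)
Your proposal is correct and follows essentially the same route as the paper: the paper's proof is exactly Moser's flow method from \cite[Lemma 2]{DM}, with Theorem 3 substituted as the linearized solver so that $v$ (hence $v_t=v/f_t$ and the flow) vanishes in $U$, and with the endpoint cases $\alpha\in\{0,1\}$ handled by the same H\"older-inclusion device you sketch (the paper uses the auxiliary exponent $1/2$, noting $f\in C^{r-1,1/2}$ resp.\ $C^{r,1/2}$). Your regularity bookkeeping ($v_t\in C^{r,\alpha}$ after dividing by $f_t$, flow of class $C^{r,\alpha}$ for $r\geq1$) matches the paper's.
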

\begin{proof}
(The proof is a trivial adaptation of that of Lemma 2 in \cite[p.9-10]{DM};
c.f. \cite[Theorem 10.7]{CDK}). Note that $\varOmega$ being smooth,
we have (1) $f\in C^{r-1,1/2}(\overline{\varOmega})$ if $\alpha=0$
and (2) $f\in C^{r,1/2}(\overline{\varOmega})$ if $\alpha=1$. By
Theorem 3 there is a solution $v$ to 
\[
\begin{cases}
\text{div\,}v=f-1 & \text{ }\\
v=0 & \text{in }\ensuremath{U}
\end{cases}
\]
which solution is (1) in $C^{r,0}(\overline{\varOmega};\mathbb{R}^{n})\supset C^{r,1/2}(\overline{\varOmega};\mathbb{R}^{n})$
if $\alpha=0$, (2) in $C^{r,1}(\overline{\varOmega};\mathbb{R}^{n})\supset C^{r+1,1/2}(\overline{\varOmega};\mathbb{R}^{n})$
if $\alpha=1$, (3) in $C^{r,\alpha}(\overline{\varOmega};\mathbb{R}^{n})\supset C^{r+1,\alpha}(\overline{\varOmega};\mathbb{R}^{n})$
if $0<\alpha<1$ (for (1) and (2) we have just used that $C^{r,0}\subset C^{r-1,1/2}$
and $C^{r,1}\subset C^{r,1/2}$, see Remark 5). Thus $v$ and $v_{t}$
(see \cite{DM}) are always of class $C^{r,\alpha}$ and so are the
solution diffeomorphisms $\Phi_{t}$, $t\in[0,1]$. With the above
$v$, the proof is the same as the original one, noting that $v=0$
in $U$ implies $v_{t}=0$ in $U$ for all $\ensuremath{t\in[0,1]}$,
which by its turn implies $\Phi_{t}=\text{id}$ in $U$, for all such
$t$. 
\end{proof}
\begin{thm}
Let $r\geq0$ be an integer and $0<\alpha<1$. Let $\varOmega\subset\mathbb{R}^{n}$,
$n\geq2$, be a bounded connected open smooth set and $U$ a collar
of $\overline{\varOmega}$. Given $f\in C^{r,\alpha}(\overline{\varOmega})$,
$f>0$ in $\overline{\varOmega}$, satisfying
\[
\begin{cases}
\int_{\varOmega}f=\text{\emph{meas}}\,\varOmega & \text{ }\\
f=1 & \text{in a neighbourhood of \ensuremath{U}}
\end{cases}
\]
there exists $\varphi\in\text{\emph{Diff}}{}^{r+1,\alpha}(\overline{\varOmega},\overline{\varOmega})$
satisfying
\[
\begin{cases}
\text{\emph{det}}\,\nabla\varphi=f\\
\varphi=\text{\emph{id}} & \text{in a neighbourhood of }U
\end{cases}
\]
\end{thm}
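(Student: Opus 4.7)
The plan is to reduce Theorem 7 to the preceding Theorems 4 and 6 via the standard Step 4 of Dacorogna--Moser: mollify $f$, apply the Moser flow method to the smooth mollification, and close the gap with the smallness version; the composition of the two resulting diffeomorphisms will be the desired $\varphi$.

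Let $V$ be a neighbourhood of $U$ where $f\equiv 1$. By Definition~2, fix a collar $U''$ such that $U$ lies in the relative interior of $U''$ and $U''\subset V$. Extend $f$ by $1$ to a $C^{r,\alpha}$ function $\widetilde{f}$ on $\mathbb{R}^{n}$ (well defined since $f=1$ near $\partial\varOmega$), and set $f_{\epsilon}:=\widetilde{f}\ast\rho_{\epsilon}$, where $\rho_{\epsilon}$ is a standard mollifier supported in $B(0,\epsilon)$. For $\epsilon$ small enough, $f_{\epsilon}\in C^{\infty}(\mathbb{R}^{n})$, $f_{\epsilon}>0$ on $\overline{\varOmega}$, and $f_{\epsilon}\equiv 1$ on $U''$ (since $\text{dist}(U'',\text{supp}(f-1))>\epsilon$). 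Because $\widetilde{f}-1$ is compactly supported in $\varOmega$, so is $f_{\epsilon}-1=(\widetilde{f}-1)\ast\rho_{\epsilon}$, and
\[
\int_{\varOmega}(f_{\epsilon}-1)\,dx=\int_{\mathbb{R}^{n}}(\widetilde{f}-1)\ast\rho_{\epsilon}\,dx=\int_{\mathbb{R}^{n}}(\widetilde{f}-1)\,dx=\int_{\varOmega}(f-1)\,dx=0,
\]
so $\int_{\varOmega}f_{\epsilon}=\text{meas}\,\varOmega$ \emph{automatically}; no measure-correcting factor is needed here, in contrast to the classical Dacorogna--Moser Step~4 where $f\neq 1$ near $\partial\varOmega$ forces such a correction. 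Apply Theorem~6 to $f_{\epsilon}$ with collar $U''$: since $f_{\epsilon}\in C^{\infty}$ and Moser's flow (via the universal Theorem~3) yields a universal solution, we obtain $\varphi_{\epsilon}\in\text{Diff}^{\infty}(\overline{\varOmega},\overline{\varOmega})$ with $\det\nabla\varphi_{\epsilon}=f_{\epsilon}$ and $\varphi_{\epsilon}=\text{id}$ in $U''$.

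Now set
\[
h(y):=\frac{f(\varphi_{\epsilon}^{-1}(y))}{f_{\epsilon}(\varphi_{\epsilon}^{-1}(y))}.
\]
Then $h\in C^{r,\alpha}(\overline{\varOmega})$, $h>0$, and $h\equiv 1$ on $U''$ (since $f=f_{\epsilon}=1$ and $\varphi_{\epsilon}=\text{id}$ there). The change of variables $y=\varphi_{\epsilon}(x)$, $dy=f_{\epsilon}(x)\,dx$, gives $\int_{\varOmega}h=\int_{\varOmega}f=\text{meas}\,\varOmega$. Fix $0<\gamma<\alpha$, so that $\gamma\leq r+\alpha$. By standard mollifier estimates plus interpolation between the uniform $C^{0,\alpha}$ bound on $f-f_{\epsilon}$ and $\|f-f_{\epsilon}\|_{C^{0}}\to 0$, we have $\|f-f_{\epsilon}\|_{C^{0,\gamma}}\to 0$. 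Together with the uniform lower bound on $f_{\epsilon}$ and a uniform $C^{1}$ bound on $\varphi_{\epsilon}^{-1}$ (from the universal linear estimate of Theorem~3 used in Moser's flow), the composition estimate in Hölder classes gives $\|h-1\|_{C^{0,\gamma}}\to 0$ as $\epsilon\to 0$. Hence for $\epsilon$ small enough, Theorem~4 applies to $h$ with collar $U''$ and yields $\varphi_{2}\in\text{Diff}^{r+1,\alpha}(\overline{\varOmega},\overline{\varOmega})$ with $\det\nabla\varphi_{2}=h$ and $\varphi_{2}=\text{id}$ in $U''$.

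Finally, define $\varphi:=\varphi_{2}\circ\varphi_{\epsilon}$. Then $\varphi\in\text{Diff}^{r+1,\alpha}(\overline{\varOmega},\overline{\varOmega})$ as a composition of a smooth and a $C^{r+1,\alpha}$ diffeomorphism, $\varphi=\text{id}$ in $U''$ (which contains a neighbourhood of $U$), and the chain rule yields $\det\nabla\varphi(x)=h(\varphi_{\epsilon}(x))\cdot f_{\epsilon}(x)=(f(x)/f_{\epsilon}(x))\cdot f_{\epsilon}(x)=f(x)$, as desired. The main technical obstacle I anticipate is the smallness estimate $\|h-1\|_{C^{0,\gamma}}\to 0$, especially in the borderline case $r=0$: it requires an interpolation argument to extract the $C^{0,\gamma}$ convergence from the known $C^{0}$ convergence and $C^{0,\alpha}$ boundedness, together with a uniform-in-$\epsilon$ composition bound; this is precisely where the universality of Theorem~3 pays off, by providing uniform $C^{1}$ control of $\varphi_{\epsilon}$.
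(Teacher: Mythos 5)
Your overall strategy (mollify, solve for the smooth mollification by the flow method, close the gap with the smallness theorem) is the right one, and your reversal of the composition order relative to the classical Step 4 is genuinely attractive: by applying the flow\textendash method diffeomorphism $\varphi_{\epsilon}$ first and feeding $h=(f/f_{\epsilon})\circ\varphi_{\epsilon}^{-1}$ to the smallness theorem, both normalizations $\int_{\varOmega}f_{\epsilon}=\text{meas}\,\varOmega$ and $\int_{\varOmega}h=\text{meas}\,\varOmega$ become automatic and the measure-correcting device disappears. The paper does the opposite: it feeds the pointwise \emph{product} $F=(h/\widetilde{f})f$ to the smallness theorem, so that $\left\Vert F-1\right\Vert _{C^{0,\gamma}}\leq\epsilon$ follows from continuity of multiplication and reciprocal alone (no composition involved), and it puts the composition $(\widetilde{f}/h)\circ\varphi_{1}^{-1}$ into the datum of the flow method, which requires no smallness; the price is that $\int_{\varOmega}F=\text{meas}\,\varOmega$ is no longer automatic, whence the measure-correcting function $h_{\widehat{t}}$ found via the intermediate value theorem \textemdash{} exactly the step your reordering is designed to avoid.

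However, the step you yourself flag \textemdash{} the uniform $C^{1}$ bound on $\varphi_{\epsilon}^{-1}$ needed for $\left\Vert h-1\right\Vert _{C^{0,\gamma}}\rightarrow0$ \textemdash{} is a genuine gap when $r=0$, and it is \emph{not} supplied by the universal estimate of Theorem 3. That estimate controls $\left\Vert v_{\epsilon}\right\Vert _{C^{1,\gamma}}$ by $\left\Vert f_{\epsilon}-1\right\Vert _{C^{0,\gamma}}$ uniformly in $\epsilon$, but the vector field generating Moser's flow is $w_{t}=-v_{\epsilon}/\big((1-t)+tf_{\epsilon}\big)$, and $\nabla w_{t}$ contains the term $v_{\epsilon}\otimes\nabla f_{\epsilon}/\big((1-t)+tf_{\epsilon}\big)^{2}$. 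For $r=0$ one only has $\left\Vert \nabla f_{\epsilon}\right\Vert _{C^{0}}\leq C\,[f]_{C^{0,\alpha}}\,\epsilon^{\alpha-1}\rightarrow\infty$, so the Gronwall estimate yields only $\text{Lip}(\varphi_{\epsilon}^{\pm1})\leq\exp(C\epsilon^{\alpha-1})$; since $[g\circ\varphi_{\epsilon}^{-1}]_{C^{0,\gamma}}\leq[g]_{C^{0,\gamma}}\,\text{Lip}(\varphi_{\epsilon}^{-1})^{\gamma}$ while $[f/f_{\epsilon}-1]_{C^{0,\gamma}}$ decays only like a power of $\epsilon$, the product cannot be shown to tend to zero, and interpolating through $\left\Vert h-1\right\Vert _{C^{0}}\rightarrow0$ does not help because the $C^{0,\alpha}$ seminorm of $h-1$ carries the same uncontrolled factor $\text{Lip}(\varphi_{\epsilon}^{-1})^{\alpha}$. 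For $r\geq1$ your argument does close, since then $\left\Vert \nabla f_{\epsilon}\right\Vert _{C^{0}}\leq\left\Vert \nabla f\right\Vert _{C^{0}}$ and the Lipschitz constants are uniformly bounded. As written, the proposal therefore proves the theorem only for $r\geq1$; for $r=0$ (the case where the gain of regularity matters most) you must either produce a genuinely new uniform estimate on the flow of $f_{\epsilon}$ or revert to the paper's ordering and accept the measure-correction step.
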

\begin{proof}
The adaptation of Step 4 in the proof of Theorem 1' \cite[p.12]{DM}
requires special attention since it is not straightforward (see Section
1.1). Here the general case of arbitrary $f$ is reduced to the case
where $\left\Vert f-1\right\Vert _{C^{0,\gamma}}$ is small enough
(Theorem 4), for some fixed $0<\gamma<\alpha<1$. Let $\epsilon=\epsilon(r,\alpha,\gamma,U,\varOmega)>0$
be the constant given by Theorem 4. We shall find $F\in C^{r,\alpha}(\overline{\varOmega}),$
$F>0$ in $\overline{\varOmega}$, satisfying 
\begin{equation}
\begin{cases}
\int_{\varOmega}F=\text{meas}\,\varOmega & \text{ }\\
F=1 & \text{in a neighbourhood of \ensuremath{U}}\\
\left\Vert F-1\right\Vert _{C^{0,\gamma}}\leq\epsilon
\end{cases}
\end{equation}
$F$ being the product of $f$ and  $h/\widetilde{f}$ , where $\widetilde{f}$
is a convolution of $f$ and $h$ is a measure correcting smooth function
$C^{0,\gamma}$-close to $1$. 

Fix $0<\gamma<\alpha$. Note that $f\in C^{0,\gamma}(\overline{\varOmega})$
since $\varOmega$ is smooth (and thus Lipschitz), see Remark 5. By
continuity of the multiplication and reciprocal ($1/\cdot$) operations
in relation to the $C^{0,\gamma}$ norm, there is $\delta>0$ such
that for any $\widetilde{f},\,h\in C^{0,\gamma}(\overline{\varOmega})$,
\[
\|\widetilde{f}-f\|_{C^{0,\gamma}}\,\,,\,\,\left\Vert h-1\right\Vert _{C^{0,\gamma}}\leq\delta\Longrightarrow\left\Vert \frac{hf}{\widetilde{f}}-1\right\Vert _{C^{0,\gamma}}\leq\epsilon
\]
Reparametrizing the 2nd factor of $\partial\varOmega\times[0,\infty)$
we may assume that $U=U_{1}=\zeta(\partial\varOmega\times[0,1])$
for some collar embedding $\zeta$:$\partial\varOmega\times[0,\infty)\hookrightarrow\overline{\varOmega}$
(see Definition 2), and that $f=1$ in $U_{3}$, where $U_{t}:=\zeta(\partial\varOmega\times[0,t${]}),
for each $t>0$. 

(A) \emph{Claim. There is a sequence $f_{k}\in C^{\infty}(\overline{\varOmega})$,
$k\in\mathbb{Z}^{+}$, satisfying}
\begin{enumerate}
\item $f_{k}>0$ \emph{in} $\overline{\varOmega}$ 
\item $f_{k}=1$ \emph{in} $U_{2}$
\item $\left\Vert f_{k}-f\right\Vert _{C^{0,\gamma}}\leq\delta$ 
\item $\left\Vert f_{k}-f\right\Vert _{C^{0,\gamma}}\xrightarrow[k\rightarrow\infty]{}0$
\end{enumerate}
Fix a mollifier $\rho\in C^{\infty}(\mathbb{R}^{n})$ such that $\rho>0$
in $\mathbb{B}^{n}$, $\rho=0$ elsewhere and $\int_{\mathbb{R}^{n}}\rho=1$.
Since $f$ extends by $1$ to the whole $\mathbb{R}^{n}$ (in the
$C^{r,\alpha}$class), for $k\in\mathbb{Z}^{+}$
\[
f_{k}:=\rho_{k}*f\in C^{\infty}(\overline{\varOmega})
\]
where $\rho_{k}(x)=k^{n}\rho(kx)$, is well defined and $f_{k}>0$
in $\overline{\varOmega}$ ($*$ is the convolution operator), thus
(1) holds; for $k$ large enough, say $k>k_{0}$, (2) holds since
$\text{supp}(f_{k}-1)\subset\text{supp}(f-1)+\text{supp}\,\rho_{k}$,
$\text{\text{supp}\,}\rho_{k}\rightarrow\{O\}$ (in the Hausdorff
metric, $O$ the origin of $\mathbb{R}^{n}$), $\text{supp}(f-1)\subset\overline{\varOmega\setminus U_{3}}$
and $U_{2}\subset\text{int}\,U_{3}$ (in $\overline{\varOmega})$.
To see that (4) holds, first note that $f,\,f_{k}\in C^{0,\alpha}(\overline{\varOmega})$
satisfy

(a) $\left\Vert f_{k}-f\right\Vert _{C^{0}}\xrightarrow[k\rightarrow\infty]{}0$ 

(b) $\left[f_{k}\right]{}_{C^{0,\alpha}}\leq\left[f\right]{}_{C^{0,\alpha}}$
(see Definition 1)

\noindent For this last assertion see for instance \cite[p.148]{GT},
noting that $f=1$ in a neighbourhood of $\partial\varOmega$, thus
$f$ extends by $1$ to $\widehat{f}\in C^{0,\alpha}(\mathbb{R}^{n})$
and $[\widehat{f}]{}_{C^{0,\alpha}}=\left[f\right]{}_{C^{0,\alpha}}$
(the norms being taken on the respective domains of definition). Now,
(4) easily follows from (a) and (b), see for instance Step 1.1 in
the proof of \cite[Theorem 16.22]{CDK}. Therefore, for $k$ large
enough, say $k>k_{1}\geq k_{0}$, (3) holds and reindexing $f_{k}$
as $f_{k}\rightarrow f_{k+k_{1}}$, both (2) and (3) hold for all
$k\in\mathbb{Z}^{+}$and the Claim is proved. Note that by the continuity
of the reciprocal operation in relation to the $C^{0,\gamma}$ norm,
(4) also implies 

(5) $\left\Vert f/f_{k}-1\right\Vert _{C^{0,\gamma}}\xrightarrow[k\rightarrow\infty]{}0$

\begin{figure}[t]
\noindent \begin{centering}
\includegraphics[scale=0.6]{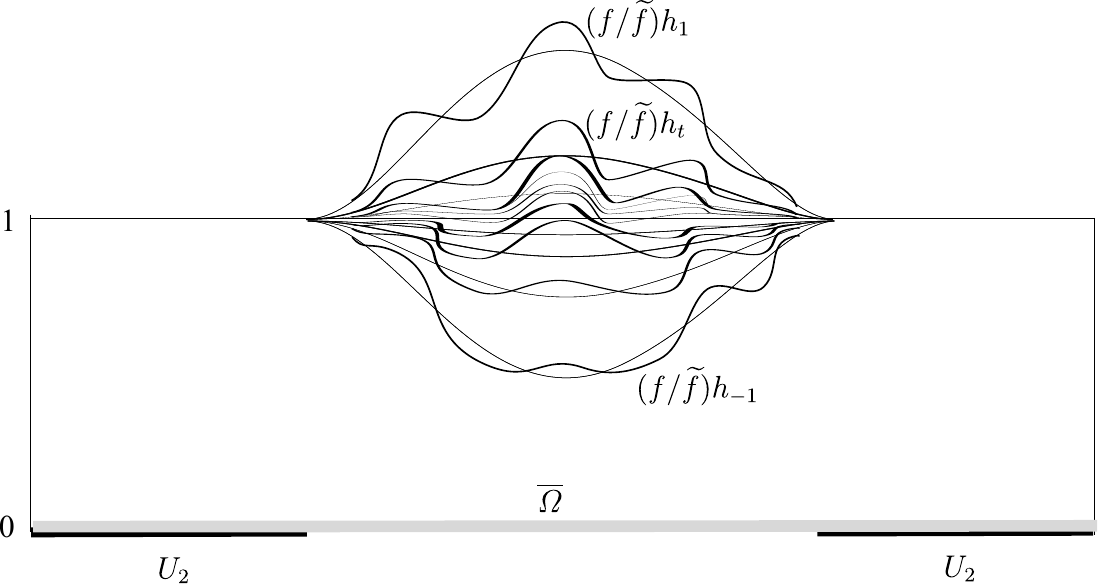}
\par\end{centering}
\caption{Finding $h_{\widehat{t}}$ satisfying $\int_{\varOmega}(f/\widetilde{f})h_{\widehat{t}}=\text{meas}\,\varOmega$.
The functions $h_{t}$ are seen in the background (bell shaped). }
\end{figure}

(B) \emph{Finding a measure correcting smooth function} $h$. Fix
$\phi\in C^{\infty}(\overline{\varOmega})$ such that $\phi=0$ in
$U_{2}$ and $0<\phi<1$ elsewhere. Take $\eta>0$ small enough so
that $H:=\eta\phi$ satisfies $\left\Vert H\right\Vert _{C^{0,\gamma}}\leq\delta$.
For $t\in[-1,1]$ let $h_{t}:=1+tH$. Note that 
\begin{itemize}
\item $h_{t}>0$ in $\overline{\varOmega}$
\item $h_{t}=1$ in $U_{2}$
\item $\left\Vert h_{t}-1\right\Vert _{C^{0,\gamma}}\leq\left\Vert H\right\Vert _{C^{0,\gamma}}\leq\delta$
\item $\int_{\varOmega}h_{1}=\text{meas}\,\varOmega+\int_{\varOmega}H>\text{meas}\,\varOmega$
\item $\int_{\varOmega}h_{-1}=\text{meas}\,\varOmega-\int_{\varOmega}H<\text{meas}\,\varOmega$
\end{itemize}
Now, by (5) above, fixing $k\in\mathbb{Z}^{+}$ large enough and letting
$\widetilde{f}:=f_{k}$ we have
\[
\int_{\varOmega}(f/\widetilde{f})h_{1}>\text{meas}\,\varOmega\text{ \,\,\,and \,\,\,}\int_{\varOmega}(f/\widetilde{f})h_{-1}<\text{meas}\,\varOmega
\]
(we can see $f/\widetilde{f}$ acting (by multiplication) as a small
$C^{0}$ perturbation on $h_{1}$ and $h_{-1}$, see Fig. 5.1). As
$\int_{\varOmega}(f/\widetilde{f})h_{t}$ varies continuously with
$t$, by the intermediate value theorem 
\[
\int_{\varOmega}(f/\widetilde{f})h=\text{meas}\,\varOmega
\]
where $h:=h_{\widehat{t}}$ for some $-1<\widehat{t}<1$. 

(C) \emph{Finding the solution diffeomorphism }$\varphi$. Summing
up, we now have 
\begin{itemize}
\item $\widetilde{f},\,h\in C^{\infty}(\overline{\varOmega})$, $\widetilde{f},\,h>0$
in $\overline{\varOmega}$
\item $\widetilde{f}=1=h$ in $U_{2}$
\item $\|\widetilde{f}-f\|_{C^{0,\gamma}}\,\,,\,\,\left\Vert h-1\right\Vert _{C^{0,\gamma}}\leq\delta$
\end{itemize}
Therefore, $F:=(h/\widetilde{f})f\in C^{r,\alpha}(\overline{\varOmega})$,
$F>0$ in $\overline{\varOmega}$ and (6.1) above holds, the neighbourhood
of $U$ in question being $U_{2}$. Now (as in the original proof
\cite[p.13]{DM}), use Theorem 4 to find a solution $\varphi_{1}\in\text{Diff}{}^{r+1,\alpha}(\overline{\varOmega},\overline{\varOmega})$
of
\[
\begin{cases}
\text{det}\,\nabla\varphi_{1}=F\\
\varphi_{1}=\text{id} & \text{in }U_{2}
\end{cases}
\]
and Theorem 5 to find $\varphi_{2}\in\text{Diff}{}^{r+1,\alpha}(\overline{\varOmega},\overline{\varOmega})$
solving
\[
\begin{cases}
\text{det}\,\nabla\varphi_{2}=(\widetilde{f}/h)\circ\varphi_{1}^{-1}\\
\varphi_{2}=\text{id} & \text{in }U_{2}
\end{cases}
\]
noting that $G:=(\widetilde{f}/h)\circ\varphi_{1}^{-1}\in C^{r+1,\alpha}(\overline{\varOmega}),$
$G>0$ in $\overline{\varOmega}$, $\int_{\varOmega}G=\text{meas}\,\varOmega$
(by the change of variables theorem) and $G=1$ in $U_{2}$. It is
immediate to verify that $\varphi=\varphi_{2}\circ\varphi_{1}$ has
all claimed properties ($U_{2}$ being the neighbourhood of $U$ where
$\varphi=\text{id}$). 
\end{proof}

\section{\textbf{The main result}}

A stronger formulation of Theorem 1 in the case $\text{supp}(f-1)\subset\varOmega$
is proved below. It shows that in this case, a solution diffeomorphism
$\varphi$ can be found that is the identity in a neighbourhood $V_{d}$
of $\partial\varOmega$ (in $\overline{\varOmega})$, which neighbourhood
depends only on the distance $d$ from $\text{supp}(f-1)$ to $\partial\varOmega$
(being independent of the function $f$ itself). When $\text{supp}(f-1)\not\subset\varOmega$,
the solution diffeomorphism is found instead via Dacorogna-Moser original
result \cite[Theorem 1']{DM}, noting that the required regularity
of the boundary can be lowered from $C^{r+3,\alpha}$ to $C^{r+2,\alpha}$
as remarked in \cite[Theorem 10.3]{CDK} (obviously, the only reason
for requiring $\varOmega$ to be of class $C^{r+2,\alpha}$ in Theorem
1 is to guarantee the existence of Dacorogna-Moser solution in the
later case).

Before stating the main result we need to establish a convention,
whose reason is explained below.

\noindent \textbf{Convention.} If $\varOmega\subset\mathbb{R}^{n}$
is a domain (see Definition 1), then $d(\emptyset,\,\partial\varOmega):=\text{inradius}\,\varOmega$,
where $d(\cdot,*)$ is the euclidean distance between two subsets
of $\mathbb{R}^{n}$ and 
\[
\text{inradius}\,\varOmega:=\text{sup}\left\{ \epsilon>0:\,\varOmega\text{ contains an open ball of radius }\epsilon\right\} 
\]

\begin{thm}
\emph{(Dacorogna-Moser Theorem - Case $\text{supp}(f-1)\subset\varOmega$).}
Let $\varOmega\subset\mathbb{R}^{n}$ be a bounded connected open
set, $r\geq0$ an integer and $0<\alpha<1$. For each $0<c\leq R:=\text{\emph{inradius}}\,\varOmega$
there exists a neighbourhood $V_{c}$ of $\partial\varOmega$ in $\overline{\varOmega}$
such that: given any $f\in C^{r,\alpha}(\overline{\varOmega})$ with
$f>0$ in $\overline{\varOmega}$ and any $0<d\leq R$ satisfying:
\[
\begin{cases}
\int_{\varOmega}f=\text{\emph{meas}}\,\varOmega\\
d(\text{\emph{supp}}(f-1),\,\partial\varOmega)\geq d
\end{cases}
\]
there exists $\varphi\in\text{\emph{Diff}}{}^{r+1,\alpha}(\overline{\varOmega},\overline{\varOmega})$
satisfying:
\[
\left\{ \begin{array}{llll}
\text{\emph{det}}\,\nabla\varphi=f\\
\varphi=\text{\emph{id}} & in\:V_{d}
\end{array}\right.
\]
\end{thm}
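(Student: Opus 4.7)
The plan is to reduce to Theorem 6 by transferring the problem to a smooth subdomain $\varOmega_d$ of $\varOmega$ that engulfs $\text{supp}(f-1)$, solving there, and extending the resulting diffeomorphism by the identity on the rest of $\overline{\varOmega}$. The uniformity demanded by the statement---that $V_d$ depend only on $d$---is obtained by fixing $\varOmega_d$ (and hence $V_d$) in advance, based solely on $d$, before any specific $f$ is considered.

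For each $0<c\leq R$, let $K_c:=\{x\in\overline{\varOmega}:d(x,\partial\varOmega)\geq c\}$, a compact subset of $\varOmega$ whenever nonempty. Using the exhaustion of $\varOmega$ by smooth (connected) domains (Appendix, Lemma 1), fix a smooth domain $\varOmega_c$ such that $K_c\subset\varOmega_c$ and $\overline{\varOmega_c}\subset\varOmega$. Setting $\delta_c:=d(\overline{\varOmega_c},\partial\varOmega)>0$ and $V_c:=\{x\in\overline{\varOmega}:d(x,\partial\varOmega)<\delta_c\}$, we obtain an open neighbourhood $V_c$ of $\partial\varOmega$ in $\overline{\varOmega}$ with $V_c\cap\overline{\varOmega_c}=\emptyset$, and both $\varOmega_c$ and $V_c$ depend only on $c$.

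Now given $f$ as in the statement with $d(\text{supp}(f-1),\partial\varOmega)\geq d$, the inclusion $\text{supp}(f-1)\subset K_d\subset\varOmega_d$ yields $f=1$ on $\overline{\varOmega}\setminus K_d$. Because $K_d$ and $\partial\varOmega_d$ are disjoint compact sets, $\overline{\varOmega_d}\setminus K_d$ is an open neighbourhood of $\partial\varOmega_d$ in $\overline{\varOmega_d}$ on which $f=1$; it contains (and is a neighbourhood of) a collar $U$ of $\overline{\varOmega_d}$ by Definition 2. The mass condition $\int_{\varOmega_d}f=\text{meas}\,\varOmega_d$ follows from $\int_{\varOmega}f=\text{meas}\,\varOmega$ together with $f=1$ on $\varOmega\setminus\overline{\varOmega_d}$. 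Applying Theorem 6 to $f|_{\overline{\varOmega_d}}$ then provides $\varphi_0\in\text{Diff}^{r+1,\alpha}(\overline{\varOmega_d},\overline{\varOmega_d})$ with $\text{det}\,\nabla\varphi_0=f$ and $\varphi_0=\text{id}$ in an open neighbourhood of $U$ inside $\overline{\varOmega_d}$, in particular in some neighbourhood of $\partial\varOmega_d$.

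Define $\varphi:=\varphi_0$ on $\overline{\varOmega_d}$ and $\varphi:=\text{id}$ on $\overline{\varOmega}\setminus\varOmega_d$. Since both pieces coincide with the identity in a common open neighbourhood of $\partial\varOmega_d$ in $\overline{\varOmega}$, $\varphi$ is a $C^{r+1,\alpha}$ bijection of $\overline{\varOmega}$ onto itself with $C^{r+1,\alpha}$ inverse built analogously from $\varphi_0^{-1}$ (the global H\"older seminorm causes no trouble: mixed pairs of points are either both in a region where $\varphi=\text{id}$ or are separated by a positive distance depending only on the thickness of that neighbourhood). On $\varOmega_d$ we have $\text{det}\,\nabla\varphi=f$ by Theorem 6; on $\varOmega\setminus\overline{\varOmega_d}$ both sides equal $1$. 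Since $V_d\cap\overline{\varOmega_d}=\emptyset$ by construction, $\varphi=\text{id}$ on $V_d$, as required. The only ingredient beyond Theorem 6 is the exhaustion of $\varOmega$ by smooth domains (Appendix, Lemma 1) used to produce $\varOmega_c$ containing the compact set $K_c$; everything else reduces to a straightforward gluing.
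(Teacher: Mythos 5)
Your proof is correct and follows essentially the same route as the paper: fix in advance, for each $c$, a smooth connected subdomain $\varOmega_c$ (via the exhaustion of Lemma 1) engulfing the compact set of points at distance $\geq c$ from $\partial\varOmega$, apply Theorem 6 there, and extend by the identity, with $V_c$ determined by $\varOmega_c$ alone. The only differences are cosmetic: the paper discretizes $c$ via the integer $[c]$ and takes $V_c=U_{[c]}\cup(\overline{\varOmega}\setminus\varOmega_{[c]})$ so as to also obtain the nested family of Remark 7, and it separately dispatches the trivial cases $n=1$ and $d=R$ (the former needed since Theorem 6 assumes $n\geq2$), which you should add in a line.
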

\begin{rem}
The construction of the neighbourhoods $V_{d}$ in the proof immediately
reveals that these are nested, 
\begin{equation}
0<\widehat{c}<c\leq R\Longrightarrow V_{\widehat{c}}\subset V_{c}\text{\,, \,\, }V_{c}\xrightarrow[c\rightarrow0^{+}]{d_{H}}\partial\varOmega\text{ \,\,\,\,and \,\,\,\,}V_{R}=\overline{\varOmega}.
\end{equation}
(as usual, $d_{H}(\cdot,*)$ is the Hausdorff metric in the space
of nonvoid closed subset of $\mathbb{R}^{n}$).
\end{rem}
\begin{rem}
If $\text{supp}(f-1)\neq\emptyset$ then this set has nonvoid interior,
thus its distance to $\partial\varOmega$ is smaller than $R:=\text{inradius}\,\varOmega$.
The compactness of $\overline{\varOmega}$ actually implies that $B_{R}(x)\subset\varOmega$
for some $x\in\varOmega$, and it is easily seen that for each $0<\epsilon\leq1$
there are functions $f_{\epsilon}$ as above for which $\text{supp}(f_{\epsilon}-1)=\overline{B_{\epsilon R}(x)}$,
thus implying that $d(\text{supp}(f_{\epsilon}-1),\partial\varOmega)=(1-\epsilon)R$
may actually attain any value $0\leq d<R$. That is why we have adopted
the convention $d(\emptyset,\partial\varOmega):=R$ thus covering
the limit case $\text{\text{supp}}(f-1)=\emptyset$ in a continuous
way (roughly speaking, $\text{\text{supp}}(f_{\epsilon}-1)$ vanishes
in the limit, as $\epsilon\rightarrow0^{+}$, since it cannot be reduced
to a single point). 
\end{rem}
\begin{proof}
(of theorem 7). If $n=1$ then the solution is trivial, see Section
2.1. Suppose that $n\geq2$. If $d=R:=\text{inradius}\,\varOmega$,
then $\text{supp}(f-1)=\emptyset$ i.e. $f=1$ in $\ensuremath{\overline{\varOmega}}$,
in which case we have the natural solution $\varphi=\text{id in \ensuremath{\overline{\varOmega}}}=:V_{R}$.
Otherwise we proceed as follows.

(A) \emph{Construction of a suitable smooth exhaustion of $\varOmega$.
}For each $0<c<R$, let $[c]$ be the unique $k\in\mathbb{Z}^{+}$
such that 
\[
\frac{R}{k+1}\leq c<\frac{R}{k}
\]
By Lemma 1 (see the Appendix), we can find a subsequence of $\varOmega_{k\in\mathbb{Z}^{+}}$,
still labeled $\varOmega_{k\in\mathbb{Z}^{+}}$, and a sequence $U_{k\in\mathbb{Z}^{+}},$
$U_{k}$ a small collar of $\overline{\varOmega_{k}}$, such that
\begin{enumerate}
\item $d_{H}\big(U_{1}\,\cup\,(\overline{\varOmega}\setminus\varOmega_{1}),\,\partial\varOmega\big)<R/2$
\item $d_{H}\big(U_{k+1}\,\cup\,(\overline{\varOmega}\setminus\varOmega_{k+1}),\,\partial\varOmega\big)<\text{min}\left(\frac{R}{k+2},\,d(\partial\varOmega_{k},\partial\varOmega)\right)$,
for each $k\geq$1.
\end{enumerate}
Define 
\[
\begin{cases}
V_{k}^{*}=U_{k}\,\cup\,(\overline{\varOmega}\setminus\varOmega_{k}) & \text{for }k\in\mathbb{Z}^{+}\\
V_{c}=V_{[c]}^{*} & \text{for }0<c<R\\
V_{R}=\overline{\varOmega}
\end{cases}
\]
Clearly, by construction $V_{k+1}^{*}\subset\text{int}\,V_{k}^{*}$
and $0<\widehat{c}<c\leq R$ implies $V_{\widehat{c}}\subset V_{c}\subset\overline{\varOmega}$,
as $0<\widehat{c}<c<R$ implies $[\widehat{c}]\geq[c]$. Also, by
construction, $d_{H}(V_{k}^{*},\partial\varOmega)$ tends to zero
as $k\rightarrow\infty$, hence $d_{H}(V_{c},\partial\varOmega)$
tends to zero as $c\rightarrow0^{+}$ (thus (7.1) holds).

(B) \emph{Solution for $0<d<R=\text{\emph{inradius}}\,\varOmega$.
}Given $f$ and $d$ as in the statement, note that, by construction
of $V_{[d]}^{*}$, $f=1$ in a neighbourhood of $V_{[d]}^{*}$, hence
$\int_{\varOmega\setminus\varOmega_{[d]}}f=\int_{\varOmega\setminus\varOmega_{[d]}}1=\text{meas}\,\varOmega\setminus\varOmega_{[d]}$,
thus
\[
\int_{\varOmega_{[d]}}f=\int_{\varOmega}f-\int_{\varOmega\setminus\varOmega_{[d]}}f=\text{meas}\,\varOmega-\text{meas}\,\varOmega\setminus\varOmega_{[d]}=\text{meas}\,\varOmega_{[d]}
\]
and $f=1$ in a neighbourhood of collar $U_{[d]}$ (in $\overline{\varOmega_{[d]}}$).
Apply Theorem 6 to get $\widehat{\varphi}\in\text{Diff}{}^{r+1,\alpha}(\overline{\varOmega_{[d]}},\overline{\varOmega_{[d]}})$
satisfying
\[
\begin{cases}
\text{det}\,\nabla\widehat{\varphi}=f|_{\overline{\varOmega_{[d]}}}\\
\widehat{\varphi}=\text{id} & \text{in }U_{[d]}
\end{cases}
\]
Finally, extend this solution to $\varphi\in\text{Diff}{}^{r+1,\alpha}(\overline{\varOmega},\overline{\varOmega})$,
letting $\varphi=\text{id}$ in $\overline{\varOmega}\setminus\varOmega_{[d]}$.
It is immediate to check that $\varphi$ has all claimed properties,
in particular $\varphi=\text{id}$ in $V_{d}=V_{[d]}^{*}=U_{[d]}\,\cup\,\overline{\varOmega}\setminus\varOmega_{[d]}$.
\end{proof}

\section{\textbf{Impossibility of the estimate} \textup{$\left\Vert \varphi-\text{id}\right\Vert _{C^{r+1,\alpha}}\leq C\left\Vert f-1\right\Vert _{C^{r,\alpha}}$}}

By the present method, which guarantees control of the support of
solutions, an estimate in Theorem 7 as that in Theorem 10.3 of \cite[p.192]{CDK}
cannot be attained. Even restricting to functions $f$ that are $C^{0,\alpha}$-close
enough to 1 (c.f. Theorem 4 and \cite[Theorem 10.9]{CDK}), we shall
exhibit strong evidence pointing to the fact that the estimate
\begin{equation}
\left\Vert \varphi-\text{id}\right\Vert _{C^{r+1,\alpha}}\leq C\left\Vert f-1\right\Vert _{C^{r,\alpha}}
\end{equation}
for some constant $C=C(r,\alpha,\varOmega)>0$, is impossible to obtain
simultaneously with control of support
\[
\ensuremath{\text{supp}(f-1)\subset\varOmega}\Longrightarrow\text{supp}(\varphi-\text{id})\subset\varOmega
\]
if one employs the present method (or variants of it) for the construction
of the solutions. To simplify the explanation, suppose that $\varOmega$
is smooth. Obviously, any solution to $\text{det}\,\nabla\varphi=f$
in $\varOmega$ satisfies
\[
\text{supp}(\varphi-\text{id})\supset\text{supp}(f-1)
\]
Actually, if $\text{supp}(f-1)\subset\varOmega$, then to carry out
the present method for the construction of a solution $\varphi$ satisfying
$\text{supp}(\varphi-\text{id})\subset\varOmega$, we need in first
place to fix a collar $U=\zeta(\partial\varOmega\times[0,\epsilon])$
of $\overline{\varOmega}$ such that $U\subset\overline{\varOmega}\setminus\text{supp}(f-1),$
in order to be able to apply Theorem 4. Clearly, 
\[
\text{thick}\,U<d\left(\text{supp}(f-1),\partial\varOmega\right)
\]
where $\text{thick}\,U$ (the \emph{thickness of $U$}) is the distance
between $\partial\varOmega$ and the ``internal'' boundary $\partial_{0}U$
of $U$ i.e.
\[
\text{thick}\,U:=d(\partial\varOmega,\partial_{0}U)\text{ \,\,where \,\,}\partial_{0}U:=\zeta(\partial\varOmega\times\epsilon)=\partial U\setminus\partial\varOmega
\]
Let $U_{k\in\mathbb{Z}^{+}}$ be a sequence of collars of $\overline{\varOmega}$
such that 
\[
\text{thick}\,U_{k}\xrightarrow[k\rightarrow\infty]{}0
\]
We shall now produce enough evidence that in Theorem 4, fixing $\gamma=\alpha$,
the undesirable facts
\[
c=c(r,\alpha,U_{k},\varOmega)\xrightarrow[k\rightarrow\infty]{}\infty\text{ \,\,and \,\,}\epsilon=\epsilon(r,\alpha,U_{k},\varOmega)\xrightarrow[k\rightarrow\infty]{}0
\]
cannot be avoided, thus dissipating any hope of establishing the estimate
(8.1) with $C$ independent of $d\left(\text{supp}(f-1),\partial\varOmega\right)$.
The proof of Theorem 4 is closely modeled on that of \cite[Theorem 10.9]{CDK},
all the estimates being the same. In particular, a brief inspection
reveals that 
\[
c=c(r,\alpha,U_{k},\varOmega)=2K_{1}\text{ \,\,and \,\,}\epsilon=\epsilon(r,\alpha,U_{k},\varOmega)\leq1/2K_{1}
\]
where $K_{1}\geq C$, $C=C(r,\alpha,U_{k},\varOmega)>0$ being the
constant given by Theorem 3 (see Step 1 in the Proof of Theorem 4).
Actually, in this $C$ it enters as a multiplicative factor a constant
$\widehat{C}=C_{2}(r,U_{k})\geq$1, controlling (in the case under
question) the $C^{r+2,\alpha}$ norm of an extension operator introduced
in the Proof of Theorem 2. We now show evidence that, when $r\geq1$,
there is no way to keep $C_{2}(r,U_{k})$ bounded as $k\rightarrow\infty$
i.e. when $k\rightarrow\infty$ the thickness of $U_{k}$ tends to
zero and 

\begin{figure}[t]
\noindent \begin{centering}
\includegraphics[scale=0.2]{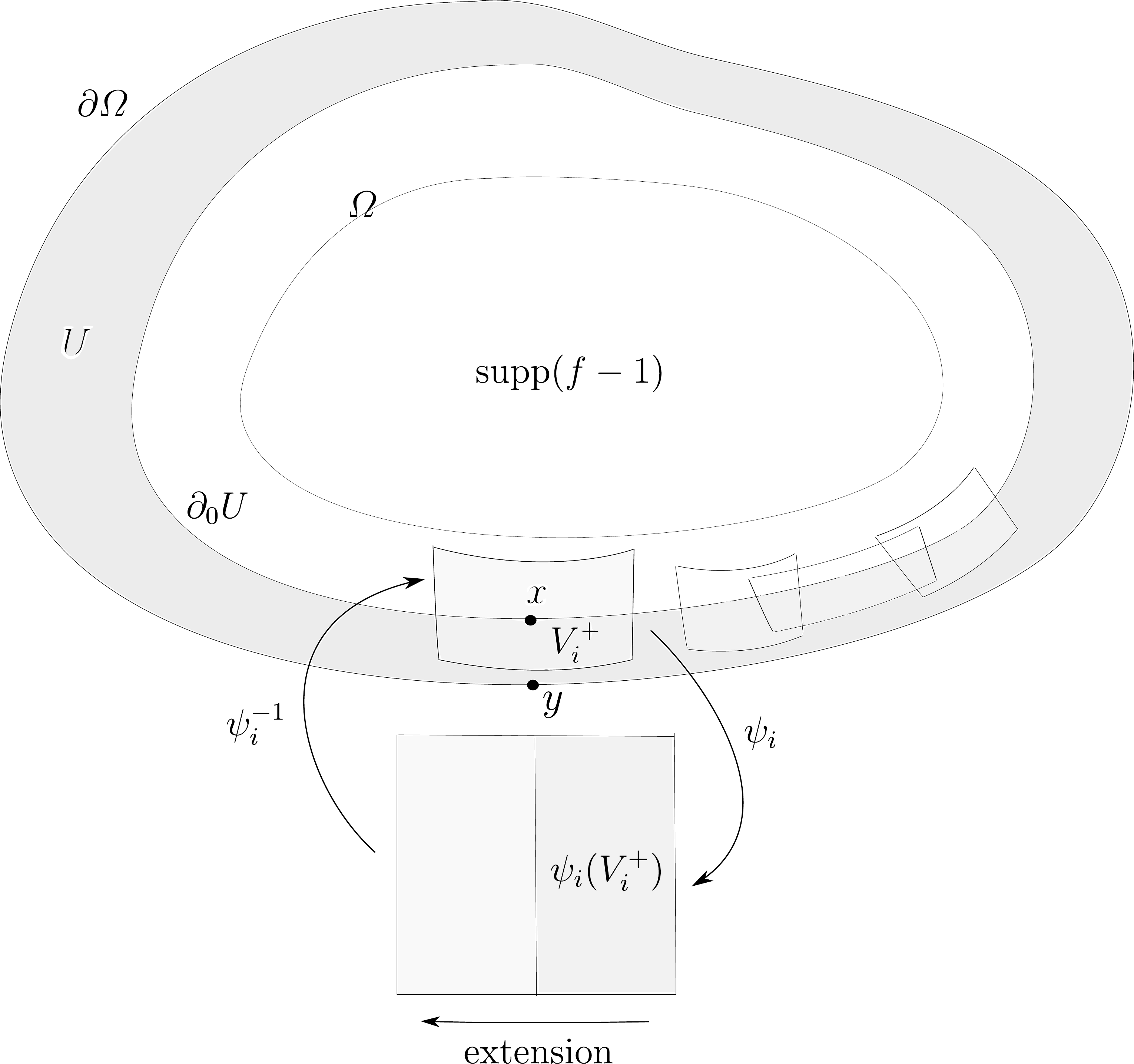}
\par\end{centering}
\caption{Extending $g\in C^{1}(U)$ to the whole $\overline{\varOmega}.$}
\end{figure}

\begin{equation}
C_{2}(r,U_{k})\xrightarrow[k\rightarrow\infty]{}\infty
\end{equation}
cannot be avoided. All extension methods (applicable in the context
of (C) in the Proof of Theorem 2) that are known to us are variants
of the same global strategy and are consequently subject to problem
(8.2), see below. Let $U$ be any of the $U_{k}$'s. Briefly, the
extension of a function $g\in C^{1}(U)$ to the whole $\overline{\varOmega}$
is obtained gluing together finitely many local extensions from the
interior to the exterior of $U$, performed on small balls (or cubes)
centred at points of $\partial_{0}U$. To simplify the explanation,
we adopt the extension method described in \cite[p.136]{GT} with
the modification of Remark 2. Since we wish only to extend $g$ to
the whole $\overline{\varOmega}$ (and not to the whole $\mathbb{R}^{n}$),
we start by covering $\partial_{0}U$ with finitely many open ``cubes''
$V_{i}$, $1\leq i\leq j$, each having one of its halves $V_{i}^{+}$
contained in $U$ (see Fig. 7.1).\footnote{More precisely, $\cup_{1\leq i\leq j}V_{i}\supset\partial_{0}U$,
where each $V_{i}$ is an open set intersecting $\partial_{0}U$ for
which there is a (boundary rectifying) diffeomorphism $\psi_{i}\in\text{Diff}^{\infty}(V_{i},\,(-2,2)^{n})$
such that $\psi_{i}(V_{i}^{+})=[0,2)\times(-2,2)^{n-1}$, where $V_{i}^{+}=V_{i}\cap U$,
thus implying $V_{i}\cap\partial\varOmega=\emptyset$. }As explained in \cite{GT} (see also Remark 2), we then use the boundary
rectifying diffeomorphisms (see Footnote 4) and Seeley's extension
operator to get, for each $1\leq i\leq j$, a $C^{1}$ extension $g_{i}$
of $g|_{V_{i}^{+}}$ to the whole $V_{i}$. Note that if we set $V_{0}:=U\setminus\partial_{0}U$,
then $\{V_{i}\}_{0\leq i\leq j}$ is an open covering of $U$ in $\overline{\varOmega}.$
Fix a partition of unity $\{\eta_{i}\}_{0\leq i\leq j}$ subordinate
to this covering.\footnote{Note that necessarily $\eta_{0}=0$ in a neighbourhood of $\partial_{0}U$
and $\eta_{0}=1$ in a neighbourhood of $\partial\varOmega$ (in $\overline{\varOmega})$.} Finally, let $g_{0}=g|_{V_{0}}$ and define the desired extension
$\widetilde{g}\in C^{1}(\overline{\varOmega})$ of $g$ as 
\[
\widetilde{g}=\sum_{i=0}^{j}\eta_{i}g_{i}
\]
(with the convention that $g_{i}=0$ in $\overline{\varOmega}\setminus V_{i}$).
Now, in order to to find a constant $\widehat{C}=\widehat{C}(1,U)\geq$1
such that 
\[
\|\widetilde{g}\|_{C^{1}(\overline{\varOmega})}\leq\widehat{C}\|g\|_{C^{1}(U)}
\]
we are naturally lead to the estimates 

\[
\begin{array}{lll}
\|\widetilde{g}\|_{C^{1}(\overline{\varOmega})} & \leq & \sum_{i=0}^{j}\|\eta_{i}g_{i}\|_{C^{1}}\\
 & \leq & \sum_{i=0}^{j}\|\eta_{i}\|_{C^{1}}\|g_{i}\|_{C^{0}}+\|\eta_{i}\|_{C^{0}}\|g_{i}\|_{C^{1}}\\
 & \leq & 2\sum_{i=0}^{j}\|\eta_{i}\|_{C^{1}}\|g_{i}\|_{C^{1}}
\end{array}
\]
It is easily seen that there is a constant $K\geq1$ depending only
on the boundary rectifying diffeomorphisms $\psi_{i}$ (see Footnote
4) and on the extension operator from the right halfcube $\mathscr{C}^{+}$
to the cube $\mathscr{C}=(-2,2)^{n}$ (in our case Seeley's operator)
such that 
\[
\|g_{i}\|_{C^{1}}\leq K\|g\|_{C^{1}(U)}
\]
and therefore the above estimate leads to
\[
\|\widetilde{g}\|_{C^{1}(\overline{\varOmega})}\leq\widehat{C}\|g\|_{C^{1}(U)}\text{ \,\,\,where \,\,\,}\widehat{C}=2K\sum_{i=0}^{j}\|\eta_{i}\|_{C^{1}}
\]
But the problem now lies in the factor $\sum_{i=0}^{j}\|\eta_{i}\|_{C^{1}},$
for one can easily see that as $k\rightarrow\infty$, the thickness
of $U=U_{k}$ tends to zero and simultaneously this quantity diverges
to $\infty$, regardless of the particular partition of unity employed
for each $U_{k}$. In fact let $\delta=\text{thick}\,U$ and fix $x\in\partial_{0}U$
and $y\in\partial\varOmega$ such that $d(x,y)=|x-y|=\delta$ (see
Fig. 7.1). For $1\leq i\leq j$ , $y\notin V_{i}$ since $V_{i}\cap\partial\varOmega=\emptyset$
(see Footnote 5), hence for all such indices $i$, $\eta_{i}(y)=0$.
Therefore, by the mean value theorem, for $1\leq i\leq j$
\[
\|\eta_{i}\|_{C^{1}}\geq\frac{\eta_{i}(x)-\eta_{i}(y)}{|x-y|}=\frac{\eta_{i}(x)}{\delta}
\]
and since $\sum_{i=1}^{j}\eta_{i}(x)=1$ (as $x\in\partial_{0}U$
implies $\eta_{0}(x)=0$) we finally have
\[
\sum_{i=0}^{j}\|\eta_{i}\|_{C^{1}}\geq\delta^{-1}\sum_{i=1}^{j}\eta_{i}(x)=\delta^{-1}
\]
therefore, when $k\rightarrow\infty$, $\widehat{C}=\widehat{C}(1,U_{k})\rightarrow\infty$
as $\delta_{k}:=\text{thick}\,U_{k}\rightarrow0$. It remains to mention
that if instead of the above extension operator, that of \cite[p.342]{CDK}
is employed, then we run into the very same problem (see in particular
\cite[p.353-355]{CDK} where an explicit formula for this extension
operator is obtained, noting that the auxiliary functions $\lambda_{i}$
play the analogue role to the above $\eta_{i}$).

\section{\textbf{Appendix }}

For the sake of completeness we include a brief proof that any domain
has an exhaustion by smooth domains (despite being a well known fact,
we could not locate a proof in the literature).
\begin{defn}
For each $\mathfrak{M}\subset2^{\mathbb{R}^{n}}$ ($2^{\mathbb{R}^{n}}$
being the set of all subsets of $\mathbb{R}^{n}$), we define $\cup\mathfrak{M}=\cup_{S\in\mathfrak{M}}S\subset\mathbb{R}^{n}$.
\end{defn}
\begin{lem}
\emph{(Exhaustion by smooth domains).} Let $\varOmega\subset\mathbb{R}^{n}$
be a bounded connected open set. Then there is a sequence $\varOmega_{k}$
of bounded connected open smooth sets satisfying:
\begin{enumerate}
\item $\overline{\varOmega_{k}}\subset\varOmega_{k+1}\subset\varOmega$
~for all $k\in\mathbb{Z}^{+}$
\item $\cup_{k\in\mathbb{Z}^{+}}\varOmega_{k}=\varOmega$
\item $d_{H}(\overline{\varOmega}\setminus\varOmega_{k},\partial\varOmega)\xrightarrow[k\rightarrow\infty]{}0$
\end{enumerate}
\end{lem}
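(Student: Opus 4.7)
The plan is to build the $\varOmega_{k}$'s in two passes: first produce a smooth (but possibly disconnected) exhaustion of $\varOmega$ via a mollified distance function combined with Sard's theorem, then restore connectedness by attaching thin tubes and re-smoothing.

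Extend $d(x):=\text{dist}(x,\mathbb{R}^{n}\setminus\varOmega)$ by zero outside $\varOmega$; it is $1$-Lipschitz on $\mathbb{R}^{n}$ with $\{d>0\}=\varOmega$. Fix a standard mollifier $\rho\in C_{c}^{\infty}(B_{1}(0))$ with $\rho\geq 0$ and $\int\rho=1$, set $\rho_{\epsilon}(x)=\epsilon^{-n}\rho(x/\epsilon)$, and define $d_{\epsilon}:=d*\rho_{\epsilon}\in C^{\infty}(\mathbb{R}^{n})$; then $|d_{\epsilon}-d|\leq\epsilon$ uniformly. By Sard's theorem, pick a regular value $t_{\epsilon}\in(2\epsilon,3\epsilon)$ of $d_{\epsilon}$ and set $\widetilde{\varOmega}_{\epsilon}:=\{d_{\epsilon}>t_{\epsilon}\}$: a bounded open set whose boundary $\{d_{\epsilon}=t_{\epsilon}\}$ is a smooth compact $(n-1)$-submanifold, satisfying $\{d>4\epsilon\}\subset\widetilde{\varOmega}_{\epsilon}\subset\{d>\epsilon\}\subset\varOmega$. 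Choosing $\epsilon_{k}\downarrow 0$ with $4\epsilon_{k+1}<\epsilon_{k}$ yields $\overline{\widetilde{\varOmega}_{\epsilon_{k}}}\subset\widetilde{\varOmega}_{\epsilon_{k+1}}$, $\bigcup_{k}\widetilde{\varOmega}_{\epsilon_{k}}=\varOmega$, and the Hausdorff bound $d_{H}(\overline{\varOmega}\setminus\widetilde{\varOmega}_{\epsilon_{k}},\partial\varOmega)\leq 4\epsilon_{k}\to 0$.

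The main obstacle is that $\widetilde{\varOmega}_{\epsilon_{k}}$ need not be connected, and the naive fix of passing to the component of a fixed base point would strand bulky ``orphan'' components inside $\overline{\varOmega}\setminus\varOmega_{k}$, destroying condition (3). However, $\widetilde{\varOmega}_{\epsilon_{k}}$ has only finitely many components $C_{1},\ldots,C_{m_{k}}$, its boundary being a compact smooth hypersurface. Pick $p_{j}\in C_{j}$ and, using path-connectedness of $\varOmega$, choose smooth embedded arcs $\gamma_{j}$ from $p_{1}$ to $p_{j}$ contained in $\widetilde{\varOmega}_{\epsilon_{k+1}}$; take tubular neighborhoods $T_{j}\subset\widetilde{\varOmega}_{\epsilon_{k+1}}$ of sufficiently small radius. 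The union $W_{k}:=\widetilde{\varOmega}_{\epsilon_{k}}\cup\bigcup_{j}T_{j}$ is open, connected, and compactly contained in $\widetilde{\varOmega}_{\epsilon_{k+1}}$, but has non-smooth corners at the tube--component junctions. Smooth these in the standard way: mollify $\chi_{W_{k}}$ at a small scale $\eta_{k}>0$, apply Sard to $\chi_{W_{k}}*\rho_{\eta_{k}}$ to pick a regular value $s\in(0,1)$, and set $\varOmega_{k}:=\{\chi_{W_{k}}*\rho_{\eta_{k}}>s\}$. For $\eta_{k}$ small enough relative to the tube radii and to the distance from $W_{k}$ to $\partial\widetilde{\varOmega}_{\epsilon_{k+1}}$, the set $\varOmega_{k}$ is a connected open smooth set sandwiched between the $\eta_{k}$-interior of $W_{k}$ and its $\eta_{k}$-neighborhood; in particular $\varOmega_{k}\subset\widetilde{\varOmega}_{\epsilon_{k+1}}$.

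Conditions (1) and (2) then follow directly (after possibly thinning the sequence so that $\overline{\varOmega_{k}}\subset\varOmega_{k+1}$), while condition (3) holds because $\overline{\varOmega}\setminus\varOmega_{k}\subset\{d\leq 4\epsilon_{k}+\eta_{k}\}$, which collapses to $\partial\varOmega$ as $k\to\infty$. The delicate technical points are verifying that the final mollification preserves connectedness of $\varOmega_{k}$ and keeps it inside $\widetilde{\varOmega}_{\epsilon_{k+1}}$; both are standard once $\eta_{k}$ is chosen small enough, since the smoothed characteristic function equals $1$ on the $\eta_{k}$-interior of the connected set $W_{k}$ and vanishes outside its $\eta_{k}$-neighborhood.
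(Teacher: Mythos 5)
Your proof is correct in substance but takes a genuinely different route from the paper's. The paper works combinatorially with balls: it fixes a dense sequence $x_{k}$ in $\varOmega$ and balls $B_{k}=B(x_{k},d(x_{k},\partial\varOmega)/2)$, inductively selects finitely many of them to cover $\overline{\varOmega_{k}}$, restores connectedness by covering an injective path meeting all components with further balls, perturbs the radii so the bounding spheres meet transversely, and then smooths the resulting edges. You instead mollify the distance function $d(\cdot,\mathbb{R}^{n}\setminus\varOmega)$ and take superlevel sets at Sard-regular values, which gives the nested smooth (possibly disconnected) exhaustion with clean quantitative containments $\{d>4\epsilon\}\subset\widetilde{\varOmega}_{\epsilon}\subset\{d>\epsilon\}$ — this makes conditions (1)--(3) essentially automatic and is arguably more transparent than the paper's inductive bookkeeping with $\xi(k)$ and the contradiction argument for (3). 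Both proofs then face the same two soft spots and treat them at a comparable level of rigor: restoring connectedness, and smoothing the glued object (the paper says ``smooth out the edges''; you re-mollify and invoke Sard again, and correctly flag that connectedness of the superlevel set is the delicate point — strictly one should pass to the component containing the $\eta_{k}$-interior of $W_{k}$, which is harmless for (1)--(3)). One small overclaim to repair: an arc joining $p_{1}$ to $p_{j}$ inside $\varOmega$ need not lie in $\widetilde{\varOmega}_{\epsilon_{k+1}}$, since $\widetilde{\varOmega}_{\epsilon_{k+1}}$ is a proper subset of $\varOmega$ with no a priori connectivity; but the arcs are compact, hence contained in $\{d>\delta\}\subset\widetilde{\varOmega}_{\epsilon_{m}}$ for some $m=m(k)$, so re-indexing along the subsequence $\epsilon_{m(k)}$ (which you already allow for when ensuring $\overline{\varOmega_{k}}\subset\varOmega_{k+1}$) closes this gap. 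The paper faces the same issue and handles it the analogous way, by covering the compact path with finitely many of the $B_{i}$.
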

\begin{proof}
Let $x_{k\in\mathbb{Z}^{+}}$ be a dense sequence in $\varOmega$
and define $B_{k}=B(x_{k},\delta_{k})$, where $\delta_{k}=d(x_{k},\partial\varOmega)/2$.
Let $\mathscr{B}=\{B_{k}\}_{k\in\mathbb{Z}^{+}}$. Clearly
\begin{equation}
\cup\mathscr{B}=\varOmega
\end{equation}
Let $\varOmega_{1}=B_{1}$. We proceed by induction over $k\in\mathbb{Z}^{+}$.
Supposing that $\varOmega_{k}$ has already been found, we now find
$\varOmega_{k+1}$. Define
\[
\xi(k)=\text{min}\,\left\{ j\in\mathbb{Z}^{+}:\,\cup_{i=1}^{j}B_{i}\supset\overline{\varOmega_{k}}\right\} 
\]
\[
\varTheta_{k}=\{B_{i}:\,1\leq i\leq\xi(k)\}
\]
Eventually, $\cup\varTheta_{k}$ is disconnected. In this case let
$\gamma_{k}:[0,1]\rightarrow\varOmega$ be an injective path that
intersects all the components of $\cup\varTheta_{k}$. Since $\widetilde{\gamma_{k}}=\text{im}\,\gamma_{k}$
is compact, we can find a finite collection $\Upsilon_{k}\subset\mathscr{B}$
such that each ball in $\Upsilon_{k}$ intersects $\widetilde{\gamma_{k}}$
and $\widetilde{\gamma_{k}}\subset\cup\Upsilon_{k}$. If $\cup\varTheta_{k}$
is connected, simply let $\Upsilon_{k}=\emptyset$. Let $\mathscr{C}_{k}=\varTheta_{k}\cup\Upsilon_{k}$.
Note that $\cup$$\mathscr{C}_{k}$ is connected. Slightly increasing
the radius of each ball $B_{i}\in\mathscr{C}_{k}$ (always to less
than the double of the original radius, in order to keep its closure
within $\varOmega$), we make all the boundary spheres $\partial B_{i}$
intersect transversely, so that the union of these enlarged balls
is a connected open set $\varOmega'_{k+1}$with piecewise smooth boundary
containing $\overline{\cup\mathscr{C}_{k}}$, whose closure $\overline{\varOmega'_{k+1}}$
is contained in $\varOmega$. Smooth out the edges of $\overline{\varOmega'_{k+1}}$
to get $\overline{\varOmega{}_{k+1}}$, the closure of a connected
open smooth set $\varOmega{}_{k+1}$ satisfying

(a) $\cup\mathscr{C}_{k}\subset\varOmega{}_{k+1}$

(b) $\overline{\varOmega{}_{k+1}}\subset\varOmega$

\noindent This is clearly possible since the smoothing of $\overline{\varOmega'_{k+1}}$
can be performed arbitrarily near $\partial\varOmega'_{k+1}$. Since
$\overline{\varOmega_{k}}\subset\cup\mathscr{C}_{k}$, it follows
from (a) and (b) that (1) holds. Observe that the inductively defined
function $\xi:\mathbb{Z}^{+}\rightarrow\mathbb{Z}^{+}$ is strictly
increasing, as for all $k\geq1$, $\overline{\varOmega_{k}}\subset\varTheta_{k}\subset\varOmega{}_{k+1}$,
therefore by (9.1), (2) holds. To see that (3) holds we proceed by
contradiction. First note that $\partial\varOmega\subset\overline{\varOmega}\setminus\varOmega_{k}$
for all $k\geq1$, hence if (3) fails then there is $\epsilon>0$,
a subsequence of $\varOmega_{k}$, still labeled $\varOmega_{k}$,
and a sequence of points $z_{k}\in\overline{\varOmega}\setminus\varOmega_{k}$
such that $d(z_{k},\partial\varOmega)\geq\epsilon$. Since $\overline{\varOmega}$
is compact, $z_{k}$ has a subsequence (still labeled $z_{k}$) converging
to some point $z\in\varOmega$. Clearly $z$ can belong to no $\varOmega_{k}$,
otherwise, by (1), there is a neighbourhood of $z_{k}$ contained
in $\cap_{i\geq k}\varOmega_{i}$, which contradicts the existence
of sequence $z_{k}$ as defined above. But $z\notin\cup_{k\in\mathbb{Z}^{+}}\varOmega_{k}$
contradicts $\cup_{k\in\mathbb{Z}^{+}}\varOmega_{k}=\varOmega$. 
\end{proof}
\textbf{Acknowledgment.} The author wishes to thank an anonymous referee
for calling his attention to \cite[Theorem 17.3]{CDK} and to its
usefulness in the proof of Theorem 2.

\emph{Note added in proof. }After the conclusion of this work, Olivier
Kneuss informed (without further details) the author that he had also
obtained Theorem 1.

\medskip{}

\end{document}